\def\BBox{\kern  -0.2cm\hbox{\vrule width 0.2cm height 0.2cm}}
\def\PP{\mathcal{P}}
\def\LL{\mathcal{L}}
\def\HH{\mathcal{H}}
\newtheorem{lemma}{Lemma}[section]
\newtheorem{theorem}{Theorem}[section]
\newtheorem{definition}{Definition}[section]
\newtheorem{corollary}{Corollary}[section]
\newtheorem{proposition}{Proposition}[section]
\newtheorem{remark}{Remark}[section]
\journal{Journal of \LaTeX\ Templates}
\begin{document}
	
	\begin{frontmatter}
		
		\title{On the packing chromatic number of Moore graphs}

		\author{
			J. Fres\'an-Figueroa\footnote{jfresan@correo.cua.uam.mx}, D. Gonz\'alez-Moreno\footnote{dgonzalez@correo.cua.uam.mx. This
				research was supported by CONACyT-M\'exico,
				under
				project CB-222104.}%
				 ~and  M. Olsen\footnote{olsen@correo.cua.uam.mx. 
				This research was supported by CONACyT-M\'exico, under project CB-A1-S-45208}
				\\
			{\small Departamento de Matem\'aticas Aplicadas y Sistemas}\\
			{\small Universidad Aut\'onoma Metropolitana - Cuajimalpa, M\'exico}
			}

		\begin{abstract}
			The \emph{packing chromatic number $\chi_\rho (G)$} of a graph $G$ is the smallest integer $k$ for which there exists a vertex coloring $\Gamma: V(G)\rightarrow \{1,2,\dots , k\}$ such that any two vertices of color $i$ are at distance at least $i + 1$.  For $g\in \{6,8,12\}$,  $(q+1,g)$-Moore graphs are $(q+1)$-regular graphs with girth $g$ which are the incidence graphs of a symmetric generalized $g/2$-gons of order $q$. In this paper we study the packing chromatic number of a $(q+1,g)$-Moore graph $G$. For $g=6$ we present the exact value of $\chi_\rho (G)$. For $g=8$, we determine $\chi_\rho (G)$ in terms of the intersection of certain structures in generalized quadrangles. For $g=12$, we present lower and upper bounds for this invariant when $q\ge 9$ an odd prime power.
		\end{abstract}
		
		\begin{keyword}
			\texttt{Packing chromatic number}\sep \texttt{Moore graphs}\sep 
			\texttt{Cages}\sep 
			\texttt{Ovoids}
			\MSC[2010] 05C15\sep 05C35 \sep 51E12
		\end{keyword}
		
	\end{frontmatter}

	\section{Introduction and definitions}

	The concept of packing coloring was introduced, under the name broadcast coloring, by Goddard et al. \cite{GHH08} to solve problems related to the assignment of broadcast frequencies to radio stations. The idea behind packing colorings is that in a network, the signals of two stations  using the same  frequency will interfere unless they are located sufficiently far apart. 
	The term broadcast coloring was renamed as packing chromatic by Bre\v{s}ar et al. \cite{BKR07}.
	A  \emph{packing $k$-coloring}  of  a  graph $G$ is  a  function $\Gamma : V(G)\rightarrow \{1,2,\dots ,k\}$ such that any two vertices of color $i$ are at distance at least $i+1$. The \emph{packing chromatic number} of $G$, $\chi_{p}(G)$,  is the smallest integer $k$ for which $G$ has a  packing $k$-coloring. The packing chromatic number has been studied  for several families of graphs: lattices and grids  \cite{FR10,MR17}, cubic graphs \cite{BK18}, subcubic outerplanar graphs \cite{GH19}, distance graphs \cite{EH12}, sierpinsky graphs \cite{BKR16}, hypercubes \cite{TV15},  cartesian products and trees \cite{BKR07}, among others. Kim et al. \cite{KLM18} showed that determining the packing chromatic number of graphs with diameter at least $3$ is NP-complete. 
	
	All graphs considered in this work are finite, simple and undirected.  We follow the book of Bondy and Murty \cite{BM08} for
	terminology and notations  not defined here. The \emph{distance} between two vertices $u$ and $v$ is denoted by $d(u,v)$.
	The \emph{$m$-neighborhood} of a vertex set $S$, denoted by $N^m(S)$ is the set of vertices at distance $m$ from $S$. If $S=\{v\}$ we denote it by $N^m(v)$ and if $m=1$ we omit the superindex $m$.  
	
	Given two integers $k\ge 2$ and $g\ge 3$ a \emph{$(k,g)$-graph} is a $k$-regular graph of girth $g$.  If $G$ is a $(k,g)$-graph, then 
	\begin{equation*}\label{cagesbound}
	|V(G)|\ge n_0(k,g) =
	\begin{cases}
	\dfrac{2(k-1)^{g/2}-2}{k-2} & \mbox{if $g$ is even};\cr 
	\dfrac{k(k-1)^{(g-1)/2}-2}{k-2} & \mbox{if $g$ is odd.} 
	\end{cases}
	\end{equation*}
	The number $n_0(k,g)$ is called the \emph{Moore bound}. A $(k,g)$-graph of order $n_0(k,g)$ is called a \emph{$(k,g)$-Moore graph} and they  are almost completely characterized \cite{BI73, D73}. 	
	By definition, a $(k,g)$-Moore graph is a $(k,g)$-graph of minimum order. In general, the $(k,g)$-graphs of minimum order are called \emph{$(k,g)$-cages} and have received a lot of attention. For more information on Moore graphs and cages see \cite{EJ13, MS11}.

	In this paper we study the packing chromatic number of $(k,g)$-Moore graphs. In Section  2, we summarize some known results of the packing chromatic number  and use them to obtain this invariant for  Moore graphs with either $k=2$  or  $g\le 5$ and $k\neq 57$. In Section 3, 
 we obtain the exact value of $\chi_\rho (G)$ of Moore graphs with girth $6$. For girth $8$, we determine the packing chromatic number of Moore graphs  in terms of the intersection of certain structures in generalized quadrangles. For $(q+1,12)$-Moore graphs, we present lower and upper bounds for this invariant when $q\ge 9$ is an odd prime power and we use the unofficial  GAP \cite{GAP}  package YAGS \cite{YAGS}  to obtain an upper bound for $q\in \{2,3\}$.

	\section{Previous results} 
	
	We begin this section with  the definition of a symmetric generalized polygon (or $n$-gon). Let $\cal{P}$ (the set of points) and $\cal{L}$ (the set of lines) be disjoint non-empty sets, and let $I$  be the point-line incidence relation. Let ${\cal{I}}=({\cal{P}},{\cal{L}}, {{I}})$, and let $G=G[\PP, \LL]$ be the  bipartite incidence graph on ${\cal{P}}\cup {\cal{L}}$ with edges joining the points from ${\cal{P}}$ to their incident lines in $\cal{L}$. Following notation of  \cite{EJ13}, the ordered triple $\cal{I}$ is  a \emph{symmetric generalized $n$-gon} of order $q$ subject to the following regularity conditions.
	\begin{itemize}
		\item[GP1:] There exists an integer $q\ge 1$ such that every line is incident to exactly $q+1$ points and every point is incident to exactly $q+1$ lines.
		
		\item[GP2:] Any two distinct lines intersect in at most one point and there is at most one line through any two distinct points.
		
		\item[GP3:] The incidence graph $G=G[{\cal{P}},{\cal{L}}]$ has diameter  $n$ and  girth  $2n$.
	\end{itemize}
	
	For more information on symmetric generalized $g/2$-gons see  \cite{Libro Rojo,M98}. The following theorem summarize the partial characterization of Moore graphs.
	
		\begin{theorem}\label{Moore}\cite{BI73, D73}
			There exists a Moore graph  of degree $k$ and girth $g$ if and only if
			any of the following conditions hold
			\begin{enumerate} 
				\item $k=2$ and $g\ge 3$ (cycles);
				\item $g=3$ and $k\ge 2$ (complete graphs); 
				\item $g=4$ and $k\ge 2$ (complete bipartite graphs);
				\item $g=5$, and $k=3$ (Petersen graph) or $k=7$ (Hoffman-Singleton graph),
				or possibly $k=57$;
				\item $g\in \{6,8,12\}$ and there exists a symmetric generalized $g/2$-gon of order
				$k-1$ (incidence graphs of  symmetric generalized $g/2$-gons).
			\end{enumerate}
		\end{theorem}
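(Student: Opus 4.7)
The plan is to prove both directions of the biconditional. Sufficiency is by explicit construction in each case: cycles $C_n$ for $k=2$; complete graphs $K_{k+1}$ for $g=3$; complete bipartite graphs $K_{k,k}$ for $g=4$; the Petersen and Hoffman--Singleton graphs for $(k,g)=(3,5)$ and $(7,5)$; and for $g\in\{6,8,12\}$ the incidence graphs of symmetric generalized $g/2$-gons of order $q=k-1$, which exist whenever $q$ is a suitable prime power via classical algebraic constructions over finite fields.

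For necessity, the central tool is the adjacency matrix $A$ of a Moore graph $G$. Counting vertices by distance from a fixed vertex shows that attainment of the Moore bound forces $|N^i(v)|=k(k-1)^{i-1}$ for every $v$ and every $i<\lfloor g/2\rfloor$, which is equivalent to $A$ satisfying a polynomial identity of the form $p(A)=J$ (for odd $g$) or a combination involving $J$ together with the bipartite structure (for even $g$), where $p$ depends only on $k$ and $g$. This rigid identity determines the spectrum of $A$ explicitly.

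For odd girth $g=5$ the identity is $A^{2}+A-(k-1)I=J$, so every eigenvalue $\lambda\neq k$ of $A$ satisfies $\lambda=\tfrac{-1\pm\sqrt{4k-3}}{2}$. Demanding that the two eigenvalue multiplicities be non-negative integers and combining with the trace identities produces a Diophantine condition on $4k-3$ that forces $k\in\{2,3,7,57\}$. For odd $g\ge 7$ and $k\ge 3$ a more delicate analysis, in which the relevant eigenvalues are roots of Chebyshev-type polynomials in $k$ and the multiplicities must be integers, rules out every remaining case; this Bannai--Ito/Damerell argument is the main technical obstacle on the odd-girth side, and I would cite \cite{BI73,D73} rather than reproduce it.

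For even girth $g\ge 4$, the Moore graph is bipartite and biregular, so its vertex bipartition naturally provides a set $\PP$ of ``points'' and a set $\LL$ of ``lines''. The distance regularity forced by the Moore bound translates axiom-for-axiom into conditions GP1--GP3, so $G$ is the incidence graph of a symmetric generalized $g/2$-gon of order $k-1$. The Feit--Higman theorem, restricting finite thick generalized $n$-gons to $n\in\{2,3,4,6,8\}$, then forces $g/2\in\{2,3,4,6\}$, i.e. $g\in\{4,6,8,12\}$. I would take Feit--Higman as a black box, exactly as the authors do; its proof, based on analyzing the eigenvalues of the point-collinearity matrix via Chebyshev polynomials and integrality, is the deepest ingredient in the classification.
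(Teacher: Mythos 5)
The paper does not prove this theorem at all: it is stated as a cited classical result, attributed to Bannai--Ito \cite{BI73} and Damerell \cite{D73}, and used as a black box to organize the rest of the article. So there is no ``paper's proof'' to match your outline against; what you have written is the standard literature route (Moore bound forces distance-regularity, hence a polynomial identity for the adjacency matrix, hence integrality constraints on eigenvalue multiplicities for odd girth; bipartite incidence-geometry translation plus Feit--Higman for even girth). As a high-level sketch that is the correct skeleton, and deferring the $g\ge 7$ odd case and Feit--Higman to citations is reasonable given their depth.

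There is, however, one concrete gap in your even-girth argument. The Feit--Higman theorem restricts finite thick generalized $n$-gons to $n\in\{2,3,4,6,8\}$, so from that statement alone you may only conclude $g\in\{4,6,8,12,16\}$, not $g\in\{4,6,8,12\}$ as you claim; the case $g=16$ (generalized octagons) is not excluded by the restriction on $n$. To kill it you need the second half of the Feit--Higman theorem: for a generalized octagon with parameters $(s,t)$, the quantity $2st$ must be a perfect square. A \emph{symmetric} octagon of order $q$ would need $2q^2$ to be a perfect square, i.e.\ $q\sqrt{2}\in\mathbb{Z}$, which is impossible for $q\ge 2$; this is why $(k,16)$-Moore graphs with $k\ge 3$ do not exist and why item 5 of the theorem lists only $g\in\{6,8,12\}$. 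Without that extra divisibility condition your necessity argument leaves $g=16$ open. A second, much smaller imprecision: in the sufficiency direction for item 5 you do not need existence of generalized polygons over prime powers at all, since the statement is conditional on the existence of the polygon --- you only need to check that the incidence graph of a symmetric generalized $g/2$-gon of order $k-1$ meets the Moore bound, which follows directly from GP1--GP3.
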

		
	The packing chromatic number has already been determined for some Moore graphs \cite{GHH08} such as complete bipartite graphs, complete graphs and cycles. 
	Let $\beta(G)$ be the independence number of a graph $G$. We rewrite a result of Goddard et al. in terms of the independence number. 
	\begin{proposition}\cite{GHH08}\label{propo1}
		For every graph $G$ of order $n$ and diameter two, 
		$\chi_\rho (G)=  n-\beta(G) + 1$.
	\end{proposition}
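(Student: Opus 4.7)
The plan is to analyze the structure that packing colorings are forced to take on a diameter-$2$ graph, and then exhibit a matching construction. In a graph of diameter $2$ every pair of distinct vertices is at distance exactly $1$ or $2$, so the distance requirements for the color classes collapse dramatically. I would start by fixing an optimal packing coloring $\Gamma \colon V(G) \to \{1,\dots,k\}$ with $k = \chi_\rho(G)$, and denote by $C_i = \Gamma^{-1}(i)$ the $i$-th color class.

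For $i \ge 2$, any two vertices in $C_i$ would have to lie at distance at least $i+1 \ge 3$; since the diameter is $2$, this forces $|C_i| \le 1$ for every $i \ge 2$. The class $C_1$, on the other hand, is required only to have pairwise distances at least $2$, i.e.\ $C_1$ must be an independent set, so $|C_1| \le \beta(G)$. Summing, I get
\begin{equation*}
n = \sum_{i=1}^{k} |C_i| \le \beta(G) + (k-1),
\end{equation*}
which yields the lower bound $\chi_\rho(G) \ge n-\beta(G)+1$.

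For the matching upper bound I would explicitly construct a packing coloring that realizes $n-\beta(G)+1$. Pick a maximum independent set $I \subseteq V(G)$ with $|I| = \beta(G)$, assign color $1$ to every vertex of $I$, and assign the remaining $n - \beta(G)$ vertices pairwise distinct colors from $\{2,3,\dots,n-\beta(G)+1\}$. Because $I$ is independent, color class $1$ satisfies its distance-$2$ requirement, and each class of color $i\ge 2$ is a singleton and hence trivially satisfies the distance-$(i+1)$ condition. This gives a valid packing $(n-\beta(G)+1)$-coloring, completing the proof.

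There is no real obstacle here: the argument is a direct exploitation of the diameter-$2$ hypothesis, which trivializes all packing constraints except the one defining independence. The only thing to be careful about is to record both inequalities separately and to note that the construction uses distinct colors for the complement of a \emph{maximum} independent set, so that the two bounds meet exactly.
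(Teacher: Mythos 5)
Your proof is correct and is exactly the standard argument: the paper states this proposition without proof (citing Goddard et al.\ \cite{GHH08}), and the argument there is precisely your two-step reasoning that diameter two forces every class $\Gamma^{-1}(i)$ with $i\ge 2$ to be a singleton while $\Gamma^{-1}(1)$ is an independent set, matched by the construction that colors a maximum independent set with $1$ and gives all other vertices distinct colors. Nothing is missing.
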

	Let $P$ denote the Petersen graph and let $HS$ denote the Hoffmann-Singleton graph, recall that  $diam(P)=diam(HS)=2$. Using Proposition \ref{propo1} and results obtained by Goddard et al. \cite{GHH08}, we obtain the following remark.
	\begin{remark} Let $n\ge 1$  and $m\ge 3$ be integers. Then:
		$\chi_\rho (K_n)=n$, $\chi_\rho (K_{n,n})=n+1$, $\chi_\rho (C_m)=3$ if $m\equiv 0~ (mod~ 4)$ and $\chi_\rho (C_m)=4$ if $m\not\equiv 0~(mod~ 4)$,  $\chi_\rho (P)=7$ and $\chi_\rho (HS)=36$.
	\end{remark}

Hence, the for items 1., 2., 3. and 4. of Theorem \ref{Moore} the packing chromatic number is known except for the possibly existing $(57,5)$-Moore graph. 		
	\section{The packing chromatic number of $(q+1,g)$-Moore graphs for $g\in \{6,8,12\}$}
	
	In this section we study the packing chromatic number for graphs considered in  item 5. of Theorem \ref{Moore}.
	 For $g\in \{6,8,12\}$, $(q+1,g)$-Moore graphs  are incidence graphs of symmetric generalized $g/2$-gons and thus $\beta(G)=n/2$. We use the following proposition.
 	\begin{proposition}\cite{GHH08}\label{cota beta}
	 		If $G$ is a bipartite graph of order $n$ and diameter $3$, then
	 		$$n-\beta(G)\leq\chi_\rho (G)\leq n-\beta(G)  +1.$$
	 	\end{proposition}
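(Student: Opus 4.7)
The plan is to establish the two inequalities separately, exploiting the two hypotheses (bipartiteness and diameter $3$) to control the size of each color class in an optimal packing coloring.

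For the upper bound $\chi_\rho(G)\le n-\beta(G)+1$, I would exhibit an explicit packing coloring. Take a maximum independent set $I$ of size $\beta(G)$ and assign color $1$ to every vertex of $I$; this is valid because an independent set is a $1$-packing. Then give the remaining $n-\beta(G)$ vertices pairwise distinct colors $2,3,\dots,n-\beta(G)+1$. Each of these colors is used on a single vertex, so the packing condition for every color $i\ge 2$ is vacuous. This produces a packing coloring with at most $n-\beta(G)+1$ colors.

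For the lower bound $\chi_\rho(G)\ge n-\beta(G)$, I would bound the size of each color class of an optimal packing $k$-coloring $\Gamma$. Write $V_i=\Gamma^{-1}(i)$. Three observations drive the argument: (i) $V_1$ is independent, so $|V_1|\le\beta(G)$; (ii) for $i\ge 3$, two distinct vertices colored $i$ would lie at distance at least $i+1\ge 4$, contradicting the assumption $\operatorname{diam}(G)=3$, so $|V_i|\le 1$; and (iii) $V_2$ is a $2$-packing, so any two of its vertices are at distance at least $3$, and, by the diameter hypothesis, exactly $3$. Summing, $n=\sum_i |V_i|\le \beta(G)+|V_2|+(k-2)$, and the claim follows as soon as $|V_2|$ is bounded by $2$.

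The main obstacle is precisely this bound $|V_2|\le 2$, and it is where bipartiteness is used. Let $A\cup B$ be the bipartition of $G$. Any two vertices at odd distance lie in different parts, so two vertices of $V_2$, being at distance $3$, necessarily belong to different parts; since there are only two parts, $V_2$ contains at most one vertex from each, giving $|V_2|\le 2$. Plugging this into the inequality above yields $k\ge n-\beta(G)$, completing the proof.
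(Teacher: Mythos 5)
Your proof is correct; the paper itself states this proposition as a citation to Goddard et al.\ \cite{GHH08} and gives no proof, so there is nothing to diverge from. Your argument (color $1$ on a maximum independent set plus singletons for the upper bound; $|V_1|\le\beta(G)$, $|V_i|\le 1$ for $i\ge 3$ via the diameter, and $|V_2|\le 2$ via parity in the bipartition for the lower bound) is the standard one, and it is exactly the style of reasoning the authors themselves deploy later when they pin down $\chi_\rho$ for $(q+1,6)$-Moore graphs.
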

		The following lemma is straightforward, but for sake of completeness we include the proof.   
		\begin{lemma}\label{independiente}
			Let $G$ be a $(q+1,g)$-Moore graph of $n$ vertices with vertex partition $(\mathcal{P}, \mathcal{L})$. If $S$ is a maximum independent set, then either $S= \mathcal{P}$ or $S=\mathcal{L}$. 
		\end{lemma}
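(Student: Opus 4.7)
The plan is to argue by contradiction via a simple edge count, using that $\beta(G)=n/2$ (as established in the discussion just before the lemma) together with the fact that $G$ is a connected $(q+1)$-regular bipartite graph. Suppose $S$ is a maximum independent set with $a:=|S\cap\mathcal{P}|\ge 1$ and $b:=|S\cap\mathcal{L}|\ge 1$. Since $|S|=n/2$ and $|\mathcal{P}|=|\mathcal{L}|=n/2$, one has $a+b=n/2$, so $|\mathcal{P}\setminus S|=b$ and $|\mathcal{L}\setminus S|=a$.

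Next I would double-count the edges incident to $\mathcal{P}\setminus S$. By $(q+1)$-regularity there are exactly $(q+1)b$ such edges. On the other hand, independence of $S$ forces every edge from a line in $S\cap\mathcal{L}$ to land in $\mathcal{P}\setminus S$, contributing already $(q+1)b$ edges. Since both counts agree, I conclude that every edge incident to $\mathcal{P}\setminus S$ comes from $S\cap\mathcal{L}$; equivalently, no point of $\mathcal{P}\setminus S$ is adjacent to any line of $\mathcal{L}\setminus S$. A symmetric count starting from $\mathcal{L}\setminus S$ yields that no line of $\mathcal{L}\setminus S$ is adjacent to any point of $\mathcal{P}\setminus S$ (already obtained) and, more usefully, that no point of $S\cap\mathcal{P}$ is adjacent to any line of $S\cap\mathcal{L}$ (which is just independence again), while every edge from $S\cap\mathcal{P}$ falls into $\mathcal{L}\setminus S$.

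The upshot is that $V(G)$ partitions into the two sets $A:=(S\cap\mathcal{P})\cup(\mathcal{L}\setminus S)$ and $B:=(\mathcal{P}\setminus S)\cup(S\cap\mathcal{L})$ with no edges between them. Since a $(q+1,g)$-Moore graph has well-defined diameter $g/2$ and is therefore connected, this forces $A=\emptyset$ or $B=\emptyset$, i.e.\ $a=0$ or $b=0$, contradicting $a,b\ge 1$. Hence $S=\mathcal{P}$ or $S=\mathcal{L}$.

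There is no real obstacle here; the only thing to notice is that the equality of the two edge counts upgrades the independence-based inclusion to an equality of edge sets, after which connectedness of the Moore graph closes the argument.
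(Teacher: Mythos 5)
Your proof is correct and is essentially the paper's own argument: the paper likewise counts the $(q+1)|L_S|$ edges leaving $S\cap\mathcal{L}$, observes they saturate all edges at $\mathcal{P}\setminus S$, and concludes that $(S\cap\mathcal{L})\cup(\mathcal{P}\setminus S)$ — your set $B$ — is a $(q+1)$-regular subgraph and hence a union of components, contradicting connectedness. The only cosmetic difference is that you run the count symmetrically on both sides, whereas the paper does it once; both rely on $|S|=\beta(G)=n/2$ established just before the lemma.
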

		\begin{proof}
			Let $P_S=S\cap {\cal{P}}$ and $L_S=S\cap {\cal{L}}$. Suppose for a contradiction, that  $L_S\neq \emptyset \neq {P_S}$. Let $|{L_S}|=r$, let $|{P_S}|=n/2-r$ and 
			let $G'$ be the bipartite subgraph induced by $L_S\cup ( {\cal{P}}\setminus P_S)$ of order $2r$. Since $S$ is an maximum independent set,  $N_G(L_S)\subset ({\cal{P}}\setminus P_S)$,  $G'$ has size $r(q+1)$ and it is $(q+1)$-regular. Therefore, $G'$ is a connected component of $G$, contradicting that $G$ is connected.
		\end{proof}
		\begin{theorem}
			If $G$ is a $(q+1,6)$-Moore  graph, then
			$$\chi_\rho (G)= q^2+q+2.$$
		\end{theorem}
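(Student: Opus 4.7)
The plan is to sandwich $\chi_\rho(G)$ between $n-\beta(G)$ and $n-\beta(G)+1$ via Proposition \ref{cota beta}, and then rule out the lower value. A $(q+1,6)$-Moore graph is the incidence graph of a symmetric projective plane of order $q$ (item 5 of Theorem \ref{Moore}), so $n=n_0(q+1,6)=2(q^2+q+1)$, it is bipartite with parts $\mathcal{P},\mathcal{L}$ of equal size, and its diameter equals $3$. By Lemma \ref{independiente}, $\beta(G)=n/2=q^2+q+1$. Proposition \ref{cota beta} therefore yields $\chi_\rho(G)\le q^2+q+2$ immediately.

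For the matching lower bound I would argue by contradiction. Assume a packing coloring with $k=q^2+q+1$ colors and classes $V_1,\ldots,V_k$. Because $\mathrm{diam}(G)=3$, no color $i\ge 3$ can host two vertices (they would need to be at distance $\ge i+1\ge 4$), so $|V_i|\le 1$ for $i\ge 3$. Partitioning the vertex set this way gives $|V_1|+|V_2|\ge n-(k-2)=\beta(G)+2$, which, combined with $|V_1|\le \beta(G)$, forces $|V_2|\ge 2$.

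The decisive step, and what I expect to be the main obstacle to articulate cleanly, is the use of projective plane geometry: in the incidence graph of a projective plane any two distinct points are at distance exactly $2$ (they lie on a unique common line) and symmetrically any two distinct lines are at distance $2$. Since $V_2$ requires pairwise distance $\ge 3$, it can contain at most one point and at most one line, so $|V_2|\le 2$. Combining the bounds yields $|V_1|=\beta(G)$ and $|V_2|=2$. Lemma \ref{independiente} then pins $V_1$ to be exactly $\mathcal{P}$ or $\mathcal{L}$; without loss of generality $V_1=\mathcal{P}$, so $V_2\subseteq \mathcal{L}$ consists of two lines at distance $2$, contradicting the packing requirement for color $2$. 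This contradiction gives $\chi_\rho(G)\ge q^2+q+2$, completing the proof.
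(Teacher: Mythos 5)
Your proposal is correct and follows essentially the same route as the paper: both use Proposition \ref{cota beta} for the upper bound, then derive a contradiction from an assumed $(q^2+q+1)$-coloring by forcing $|\Gamma^{-1}(2)|=2$ and $|\Gamma^{-1}(1)|=\beta(G)$ and invoking Lemma \ref{independiente}. The only cosmetic difference is where the contradiction lands: the paper concludes that $\Gamma^{-1}(1)$ must meet both parts (violating Lemma \ref{independiente}), while you conclude that $\Gamma^{-1}(2)$ must consist of two same-part vertices at distance $2$; these are equivalent given the bipartite parity of distances.
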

		\begin{proof}
			Let $G$ be a  $(q+1,6)$-Moore graph of order $n=n_0(q+1,6)=2(q^2+q+1)$.  Since $G$ is the incidence graph of a symmetric generalized $3$-gon (projective plane), it follows that the diameter of $G$ is $3$, $\beta(G)=n/2$ and there is a bipartition $({\cal{P}},{\cal{L}})$ of $V(G)$ such that $|{\cal{P}}|=|{\cal{L}}|=n/2$. By Proposition \ref{cota beta},  $n/2 \leq\chi_\rho (G)\leq n/2 +1$. 
			Suppose, for the sake of contradiction, that $\chi_\rho (G)=n/2$. Let $\Gamma:V(G)\rightarrow \{1,2,\dots ,n/2\}$ be an optimal packing coloring of $G$. Since diameter of $G$ is $3$,  it follows that $\Gamma^{-1}(i)$ is a singular class for $3\le i\le n/2$, $\Gamma$ has at least $n/2-2$ singular classes and exactly $n/2+2$ vertices of color $1$ or $2$. If $\Gamma^{-1}(2)$ is a singular class, then $|\Gamma^{-1}(1)|=n/2+1> \beta (G)$ contradicting the fact that the vertices of color $1$ induce an independent set.
			Hence, $\Gamma^{-1}(2)$ is not a singular class. Let $u,v\in \Gamma^{-1}(2)$, then $d(u,v)= 3$, implying that $u$ and $v$ belong to different parts of the bipartition. Therefore, $|\Gamma^{-1}(2)|=2$,  $|\Gamma^{-1}(1)|=n/2$ and $\Gamma^{-1}(1)\cap \mathcal{P} \neq \emptyset \neq \Gamma^{-1}(1)\cap \mathcal{P}$ which contradicts Lemma \ref{independiente}. Thus, $\chi_\rho (G)=n/2+1$ and the result follows.
		\end{proof}
		
		Two points or two  lines of a symmetric generalized $g/2$-gon  of order $q$ are called \emph{opposite} if they are at distance $g/2$. An \emph{ovoid} $\cal{O}$ (resp. \emph{spread} $\cal{S}$) in a generalized $g/2$-gon of order $q$ is a set of $q^{g/4}+1$ mutually opposite points (lines), such that every element $x$ of the generalized $g/2$-gon is at distance at most $g/4$ from at least one point $p$ of $\mathcal{O}$ (resp. one line $l$ of $\cal{S}$).	
	Ovoids and spreads have similar properties and have been widely studied, for more information see \cite{M98}. In graph terminology, an ovoid of a $(q+1,g)$-Moore graph for $g\in \{6,8,12\}$  is a set  $q^{g/4}+1$ of vertices which are mutually at distance $g/2$.
	\begin{proposition}\label{lemacamino} \cite{Birregular}
		A $(q+1,g)$-Moore graph with $q$ a prime power and $g= 8$, or $q$ an odd prime power different
		from 5 and 7 and $g =12$, contain exactly $q^{g/4}+1$ vertices which are mutually at distance  $g/2$.
	\end{proposition}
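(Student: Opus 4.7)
The plan is to identify the Moore graph with the incidence graph of a classical generalized polygon and then to exhibit the required set as the points (or lines) of an ovoid (resp.\ spread) in that polygon. The key translation I would rely on is that mutually opposite points in a generalized $g/2$-gon correspond exactly to vertices at graph-distance $g/2$ lying on the same side of the canonical bipartition, so a configuration of $q^{g/4}+1$ mutually $g/2$-distant vertices is literally an ovoid in the geometric sense.

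For $g=8$ and $q$ any prime power, the Moore graph is the incidence graph of a generalized quadrangle of order $q$, and I would work inside the classical parabolic quadrangle $Q(4,q)$. The plan is to take a non-tangent hyperplane $\Pi \subset \mathrm{PG}(4,q)$ and consider $Q(4,q)\cap \Pi$, which is an elliptic quadric $Q^{-}(3,q)$ of size exactly $q^{2}+1$. A short polar-space computation shows that any two points of $Q^{-}(3,q)$ are non-collinear on $Q(4,q)$, and non-collinearity on the quadric translates directly into distance $4 = g/2$ in the incidence graph. This handles the $g=8$ case cleanly.

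For $g=12$ and $q$ an odd prime power with $q \notin \{5,7\}$, the Moore graph is the incidence graph of a generalized hexagon of order $q$, essentially the split Cayley hexagon $H(q)$. My plan would be to embed $H(q)$ into $Q(6,q) \subset \mathrm{PG}(6,q)$ via its standard $G_{2}(q)$-module description, and then try to cut out $q^{3}+1$ pairwise opposite points either as a carefully chosen hyperplane (or low-dimensional subspace) section of $H(q)$, or as an orbit of a suitable subgroup of $G_{2}(q)$. The mutual-opposition condition amounts to forbidding hexagon-distances $2$ and $4$ between any two chosen points, which can in principle be verified by explicit coordinate calculations in the ambient quadric.

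I expect the hexagon case to be the real obstacle: the $g=8$ argument is essentially a calibration exercise with elliptic quadrics, whereas constructing and certifying an ovoid in $H(q)$ for general odd $q$ is delicate, and it is precisely at this step that one must confront the exclusion of $q \in \{5,7\}$. The natural algebraic construction likely produces a configuration with the wrong cardinality or with some collinear pair when $q=5$ or $q=7$, so I would treat these sporadic obstructions as part of the proof rather than a defect of the method, and separate the argument into a generic algebraic construction plus a finite case check that explains exactly why $5$ and $7$ are omitted.
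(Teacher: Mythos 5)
The paper does not actually prove this proposition; it is imported verbatim from the reference on bi-regular cages, so there is no internal proof to compare against, and your proposal must be judged on its own terms. It has two genuine gaps. First, you address only half of the statement. The word ``exactly'' is load-bearing: the paper later uses the proposition as an \emph{upper bound} ($\beta_4=q^2+1$ in the girth-$8$ theorem, and $|\Gamma^{-1}(4)\cap\mathcal{P}|\le q^3+1$, $|\Gamma^{-1}(5)|\le q^3+1$ in the hexagon lemmas), so a proof must show both that a set of $q^{g/4}+1$ mutually opposite vertices exists and that no larger such set can exist. Your plan only constructs the set; the maximality needs a separate counting argument (a partial ovoid or partial spread of a generalized quadrangle of order $q$ has at most $q^2+1$ elements, with the analogous bound $q^3+1$ for generalized hexagons), which you never mention.

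Second, your plan for $g=12$ would fail as written. You propose to produce $q^3+1$ pairwise opposite \emph{points} of the split Cayley hexagon $H(q)$, i.e.\ an ovoid of $H(q)$. Such ovoids are known to exist only for $q=3^h$; for general odd prime powers their existence is a well-known open problem, so no ``generic algebraic construction plus finite case check'' will deliver them, and your guess that the exclusion of $q\in\{5,7\}$ reflects sporadic failures of that construction is unfounded. The object that does exist for every prime power $q$ is the Hermitian \emph{spread}: the $q^3+1$ lines of $H(q)$ lying in a $Q^-(5,q)$ hyperplane section of $Q(6,q)$ are mutually opposite, and since points and lines are indistinguishable as vertices of the incidence graph (the paper's own definition of an ovoid of a Moore graph is just a set of vertices pairwise at distance $g/2$), a spread serves exactly as well. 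Your opening sentence allows for ``ovoid (resp.\ spread)'' but your concrete hexagon plan commits to the wrong side of the bipartition. A smaller caveat: for $g=8$ you work only inside the classical $Q(4,q)$, whereas the statement concerns an arbitrary $(q+1,8)$-Moore graph and generalized quadrangles of order $q$ are not classified; strictly speaking this also leaves a gap, though the cited source has the same scope.
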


	We use the following  coordinatization of a $(q+1,8)$-Moore graph, where $V_0=\cal{P}$ and $V_1 = \cal{L}$.
	\begin{definition}\cite{AABL}\label{cordinadas Camino}
		Let $\mathbb{F}_q$ be a finite field with $q \ge 2$ a prime power and $\rho$ a symbol
		not belonging to $\mathbb{F}_q$. Let $G$ be the incidence graph of a generalized quadrangle of order $q$. Let $(V_0, V_1)$ be the bipartition of $G$ with $V_i =
		\mathbb{F}^3_q\cup \{(\rho, b, c)_i, (\rho, \rho, c)_i : b, c \in\mathbb{F}_q\} \cup \{(\rho, \rho, \rho)_i\}$, $i \in \{0, 1\}$ and edge set defined as follows:
		
		For all $a \in\mathbb{F}_q \cup\{\rho\}$ and for all $b, c \in\mathbb{F}_q$ :
		
		$N_{G} ((a, b, c)_1) =
		\left\{\begin{array}{ll}
		\{(w, aw + b, a^2w + 2ab + c)_0 : w \in\mathbb{F}_q\} \cup \{(\rho, a, c)_0\}& if~ a \in\mathbb{F}_q; \\
		\{(c, b,w)_0 : w \in\mathbb{F}_q\} \cup \{(\rho, \rho, c)_0\} & if~ a = \rho.
		\end{array}\right. $
		
		$N_{G} ((\rho, \rho, c)_1) = \{(\rho, c,w)_0 : w \in\mathbb{F}_q\} \cup \{(\rho, \rho, \rho)_0\}$
		
		$N_{G} ((\rho, \rho, \rho)_1) = \{(\rho, \rho,w)_0 : w \in\mathbb{F}_q\} \cup \{(\rho, \rho, \rho)_0\}$.
		
		Or equivalently, for all $i \in\mathbb{F}_q \cup \{\rho\}$ and for all $j, k \in\mathbb{F}_q$ :
		
		$N_{G} ((i, j, k)_0) =
		\left\{\begin{array}{ll}
		\{(w, j - wi, w^2i - 2wj + k)_1 : w \in\mathbb{F}_q\} \cup\{(\rho, j, i)_1\}& if~ i \in\mathbb{F}_q;\\
		\{(j,w, k)_1 : w \in\mathbb{F}_q\} \cup \{(\rho, \rho, j)_1\}&  if~ i = \rho.
		\end{array}\right. $
		
		$N_{G} ((\rho, \rho, k)_0) = \{(\rho,w, k)_1 : w \in\mathbb{F}_q\} \cup \{(\rho, \rho, \rho)_1\};$
		
		$N_{G} ((\rho, \rho, \rho)_0) = \{(\rho, \rho,w)_1 : w \in\mathbb{F}_q\} \cup \{(\rho, \rho, \rho)_1\}$.
	\end{definition}
	
	Recall that a $(q+1,8)$-Moore graph  $G$ has order  $2(q+1)(q^2+1)$,  diameter $4$, an ovoid of $G$ has cardinality $q^2+1$  and a $(q+1,8)$-Moore graph exist when $q$ is a prime power. 

	\begin{theorem}
		Let $q$ be an odd prime power. If $G$ is a $(q+1,8)$-Moore graph, then
		$$(q^2+1)(q-1)+3\le \chi_\rho (G)\le (q^2+1)(q-1)+4.$$
	\end{theorem}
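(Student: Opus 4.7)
The plan is to establish the upper and lower bounds separately.

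For the upper bound $\chi_\rho(G)\le (q^2+1)(q-1)+4$, I construct an explicit packing coloring using the coordinatization of Definition 3.1. For each $a\in\mathbb{F}_q$, the family $\mathcal{S}_a:=\{(a,b,c)_1:b,c\in\mathbb{F}_q\}\cup\{(\rho,\rho,\rho)_1\}$ is a spread in $\mathcal{L}$: inspecting the neighborhoods, two distinct lines with the same finite first coordinate $a$ share no common point, and $(\rho,\rho,\rho)_1$ has no common point with any $(a,b,c)_1$ for $a\in\mathbb{F}_q$. Since these $q^2+1$ pairwise non-concurrent lines contribute $(q^2+1)(q+1)=|\mathcal{P}|$ incidences, they partition $\mathcal{P}$. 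Moreover $\mathcal{S}_0\cap\mathcal{S}_1=\{(\rho,\rho,\rho)_1\}$, so $\mathcal{S}_1\setminus\mathcal{S}_0=\{(1,b,c)_1:b,c\in\mathbb{F}_q\}$ is a partial spread of size $q^2$ disjoint from $\mathcal{S}_0$. I assign color $1$ to every vertex of $\mathcal{P}$ (an independent set of size $(q+1)(q^2+1)$), color $3$ to $\mathcal{S}_0$, color $2$ to $\mathcal{S}_1\setminus\mathcal{S}_0$, and a distinct color $\ge 4$ to each of the remaining $(q+1)(q^2+1)-(q^2+1)-q^2=q^3-q^2+q$ lines (singletons trivially satisfy their distance constraint since $\mathrm{diam}(G)=4$). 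The total count is $3+q^3-q^2+q=(q^2+1)(q-1)+4$.

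For the lower bound $\chi_\rho(G)\ge (q^2+1)(q-1)+3$, let $\Gamma$ be any packing $k$-coloring of $G$ and $V_i=\Gamma^{-1}(i)$. Since $\mathrm{diam}(G)=4$, any $V_i$ with $i\ge 4$ is a singleton (the requirement $\ge i+1\ge 5$ exceeds the diameter), so after re-indexing, $k=n-(|V_1|+|V_2|+|V_3|)+3$, and the task reduces to proving
$$|V_1|+|V_2|+|V_3|\le (q^2+1)(q+3)=(q+1)(q^2+1)+2(q^2+1).$$
I combine three facts. First, since $G$ is a $(q+1)$-regular bipartite graph with balanced parts, K\"onig's theorem together with Lemma 3.1 gives $|V_1|\le\beta(G)=(q+1)(q^2+1)$. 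Second, cross-part distances in $G$ are odd, so $V_3$ (pairwise distance $\ge 4$) lies in a single part and is a set of pairwise opposite vertices, whence Proposition 3.2 yields $|V_3|\le q^2+1$. Third, in the ``extremal'' case where $V_1$ attains $\beta(G)$ (so by Lemma 3.1, $V_1=\mathcal{P}$ or $V_1=\mathcal{L}$), both $V_2$ and $V_3$ are forced into the opposite part as two disjoint sets of pairwise opposite vertices, each bounded by $q^2+1$, giving $|V_2|+|V_3|\le 2(q^2+1)$ and the desired inequality immediately.

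The principal obstacle is the non-extremal case where $V_1$ is not a maximum independent set and $V_2$ is mixed, i.e., $V_2=C\cup D$ with $C\subseteq\mathcal{P}$ a partial ovoid, $D\subseteq\mathcal{L}$ a partial spread, and every point of $C$ non-incident to every line of $D$; here $|V_2|$ can in principle exceed $q^2+1$, so the three individual bounds do not directly add up. I would resolve this by a double-counting/exchange argument: the $|D|(q+1)$ points covered by $D$ are forbidden for $C$, and because $V_1$ is independent with $V_1\cap\mathcal{L}$ disjoint from $N(V_1\cap\mathcal{P})$, any mixing in $V_2$ forces a compensating loss in $|V_1|$. Equivalently, I would show that the sum $|V_1|+|V_2|+|V_3|$ can only increase by moving all of $V_1$ into a single part, reducing the problem to the extremal case treated above. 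Making the compensating count explicit — tracking how the $(q+1)$-covering structure of $D$ interacts with the König bound on $V_1$ — is the main technical step.
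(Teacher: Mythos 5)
Your upper-bound construction fails at its first step: the set $\mathcal{S}_a=\{(a,b,c)_1:b,c\in\mathbb{F}_q\}\cup\{(\rho,\rho,\rho)_1\}$ is not a spread. In Definition 3.1 the neighbourhood of $(a,b,c)_1$ with $a\in\mathbb{F}_q$ is $\{(w,aw+b,a^2w+2ab+c)_0:w\in\mathbb{F}_q\}\cup\{(\rho,a,c)_0\}$, and the extra point $(\rho,a,c)_0$ depends only on $a$ and $c$, not on $b$. Hence for $b\neq b'$ the lines $(a,b,c)_1$ and $(a,b',c)_1$ are concurrent at $(\rho,a,c)_0$, i.e.\ at distance $2$, so they cannot both receive colour $2$ (or $3$). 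The incidence count $(q^2+1)(q+1)=|\mathcal{P}|$ does not certify that these lines partition $\mathcal{P}$ unless you already know they are pairwise non-concurrent, which is exactly what fails. The overall strategy (colour one side of the bipartition with $1$, put colours $2$ and $3$ on two ovoids or spreads meeting in at most one element, and use singletons elsewhere) is the same as the paper's, which works with two ovoids $O,O'$ in $V_0$ meeting only in $(\rho,\rho,\rho)_0$; but you need a correct pair of such structures, and $\mathcal{S}_0,\mathcal{S}_1$ are not one.

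The lower bound is where the real work lies, and you stop exactly at the hard step. You correctly reduce everything to bounding $|V_2|$ when it is mixed ($C\subseteq\mathcal{P}$ a partial ovoid, $D\subseteq\mathcal{L}$ a partial spread, with no incidences between them), but the ``compensating loss in $|V_1|$'' exchange argument is only announced, never carried out, and it is not the mechanism that actually works: the paper proves the unconditional bound $|\Gamma^{-1}(2)|\le q^2+1$ with no reference to $V_1$ at all. Concretely, if $|C|=q^2$ it invokes the extension theorem for partial ovoids (Payne--Thas, Theorem 2.7.1) to complete $C$ to an ovoid $C\cup\{v\}$; since every line meets this ovoid in exactly one point and the lines of $D$ miss $C$, every line of $D$ passes through $v$, forcing $|D|\le 1$. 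If $|C|=q^2-\delta$ with $\delta\ge 1$, it considers the set $S$ of $(q+1)(\delta+1)$ lines missing $C$, uses the girth-$8$ condition to show each $l\in S$ has at least $q+\delta$ lines of $S$ at distance $2$ from it, and deduces $|D|\le (q+1)(\delta+1)/(q+\delta+1)$, whence $|V_2|<q^2+1$. Without an argument of this kind --- or an actual proof that your proposed exchange cannot decrease $|V_1|+|V_2|+|V_3|$, which you do not supply --- the inequality $|V_1|+|V_2|+|V_3|\le(q^2+1)(q+3)$ is unjustified and the lower bound does not follow.
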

	\begin{proof}
		Let $G=G[V_0,V_1]$ be a $(q+1,8)$-Moore graph. For the upper bound, we use the cordinatization in Definition \ref{cordinadas Camino} to construct a packing coloring of $G$ with $(q^2+1)(q-1)+4$ colors.
		We prove that the sets $O=\{(\rho,\rho,\rho)_0\}\cup\{(i,j,0)_0\mid i,j\in \mathbb{F}_q\}$ and $O'=\{(\rho,\rho,\rho)_0\}\cup\{(i,j,1)_0\mid i,j\in \mathbb{F}_q\}$ are ovoids in $G$.
			Let $x,y\in O$ be two distinct vertices. Since $x,y\in V_0$, the distance between them is even. 
			We prove that $d(x,y)=4$. 
			By  Definition \ref{cordinadas Camino}, $d((\rho,\rho,\rho)_0,(i,j,0)_0)=4$. Suppose by contradiction that there exist $x,y\in O$ such that  $d(x,y)=2$. Let $x=(i,j,0)_0$, let $y=(i',j',0)_0$ and let $L=(a,b,c)_1$ be the unique vertex adjacent to $x$ and $y$. Then, by  Definition \ref{cordinadas Camino},
			$$(i,j,0)_0=(i,ai+b,a^2i+2ab+c)_0\qquad and\qquad (i',j',0)_0=(i',ai'+b,a^2i'+2ab+c)_0$$
			Since $a^2i+2ab+c=0=a^2i'+2ab+c$, it follows that $i=i'$, $j=ai+b=ai'+b=j'$ and $x=y$, a contradiction. Hence, every pair of vertices in $O$ are at distance $4$. Furthermore,  $i,j\in \mathbb{F}_q$ hence $|O|=q^2+1$ and $O$ is an ovoid. Analogously $O'$ is an ovoid and by Definition \ref{cordinadas Camino}, $O\cap O'=\{(\rho,\rho,\rho)_0\}$.
		We define $\Gamma:V(G)\rightarrow \{1,2, \dots , (q^2+1)(q-1)+4\}$ as follows: $\Gamma^{-1}(1)=V_1$, $\Gamma^{-1}(2)=O$, $\Gamma^{-1}(3)=O'\setminus \{(\rho,\rho,\rho)_0\}$, and color each vertex of the remaining vertices with a different color. Thus, $\Gamma$ is a packing coloring that uses $2(q^2+1)(q+1)-(q^2+1)(q+1)-(q^2+1)-q^2+3=(q^2+1)(q-1)+4$ colors and $$\chi_\rho (G)\le (q^2+1)(q-1)+4.$$

		For the lower bound, we use geometric properties of the generalized quadrangles. Recall that $V_0 = \mathcal{P}$ and $V_1 =\mathcal{L}$.  
		Observe that  
		$\Gamma^{-1}(i)$ is a singular class for $i\ge 4$, and   $\Gamma^{-1}(3)$ is contained in the same part of $G$, because vertices of color $3$ are at distance $4$ and $diam(G)=4$.
		Let $\beta_4$ be the maximal order of a set of vertices at distance at least $4$. By Proposition \ref{lemacamino}, $\beta_4= q^2+1$ and thus $|\Gamma^{-1}(3)|\le  q^2+1$.
		Since $\beta(G)= (q^2+1)(q+1)$, it follows that $|\Gamma^{-1}(1)|\le  (q^2+1)(q+1)$. Hence, it suffices to prove that $|\Gamma^{-1}(2)|\le  q^2+1$.
		If $\Gamma^{-1}(2)\subset {\cal{P}}$, then $\Gamma^{-1}(2)$ is a set of vertices at distance 4, and by Proposition \ref{lemacamino}, $|\Gamma^{-1}(2)|\le q^2+1$ and the result holds. The case $\Gamma^{-1}(2)\subset {\cal{L}}$ is analogous.
		Assume that $\Gamma^{-1}(2)\cap {\cal{P}}\neq \emptyset\neq \Gamma^{-1}(2)\cap {\cal{L}}$. Let ${\cal{P}}_2 =\Gamma^{-1}(2)\cap {\cal{P}}$  and ${\cal{L}}_2=\Gamma^{-1}(2)\cap {\cal{L}}$. 
		If $|{\cal{P}}_2|=q^2$, then ${\cal{P}}_2$ can be extended to an ovoid (Theorem 2.7.1  \cite{Libro Rojo}). Let $v\in {\cal{P}}$ such that ${\cal{P}}_2 \cup \{v\}$ is an ovoid. By the  definition of ovoids, every vertex in ${\cal{L}}$ is adjacent to exactly one vertex in ${\cal{P}}_2\cup \{v\}$. Note that vertices in ${\cal{L}}_2$ can not be adjacent to vertices in ${\cal{P}}_2$,  thus every vertex in ${\cal{L}}_2$ is adjacent to $v$. Since   vertices in $\Gamma^{-1}(2)$ are pairwise at distance at least $3$, therefore $|{\cal{L}}_2|=1$ and $|\Gamma^{-1}(2)|=q^2+1$.
				
		Assume that $|{\cal{P}}_2|	\le q^2-1$. Let $|{\cal{P}}_2|=q^2-\delta$ with $\delta \ge 1$. Since vertices in $\Gamma^{-1}(2)$ are pairwise at distance at least $3$, the vertices of ${\cal{P}}_2$ do not have neighbours in common, therefore $|N({\cal{P}}_2)|=(q^2-\delta)(q+1)$. Let $S={\cal{L}}\setminus N({\cal{P}}_2)$. Observe that ${\cal{L}}_2$ is contained in $S$. Since $|{\cal{L}}| = (q^2+1)(q+1)$, it follows that  $|S|= (q+1)(\delta+1)$. Let $l\in S$ and let $N(l)=\{ y_0,y_1,\dots,y_q\}$. Notice that $y_i \notin {\cal{P}}_2$ for $i \in\{0, 1 , \ldots ,q\}$.  Let $z_i\in N(y_i)$ and let $z_j\in N(y_j)$ with $z_i \neq l \neq z_j$. Then, $z_i\neq z_j$, otherwise $(l,y_i,z_i,y_j,l)$ is a $4$-cycle, a contradiction. Moreover,  $z_i$ and $z_j$ do not share neighbours, otherwise  $(l,y_i,z_i,p,z_j,y_j,l)$ is $6$-cycle, a contradiction.
		Since the vertices of ${\cal{P}}_2$  do not have neighbours in common, the neighbours of ${\cal{P}}_2$ induce a partition of $\LL-S$ into $q^2-\delta$ classes, $U_{v}$ for $v\in {\cal{P}}_2$. In order to prove that each class $U_v$ has at most one neighbour in $N(l)$,
		we assume for a contradiction, that there is a class $U_\alpha$ such that $N(U_\alpha)$ contains at least two vertices $y_i,y_j\in N(l)$. In this case, either $(x,y_i,l,y_j,x)$ is a $4$-cycle for some $x\in U_\alpha$ or  $(\alpha, x_1,y_i,l,y_j,x_2,\alpha)$ is a $6$-cycle for some $x_1,x_2\in U_\alpha$, a contradiction. Hence,   each class $U_v$ has at most one neighbour in $N(l)$, see Figure \ref{Fig8}. 
		
		\begin{figure}[h!]
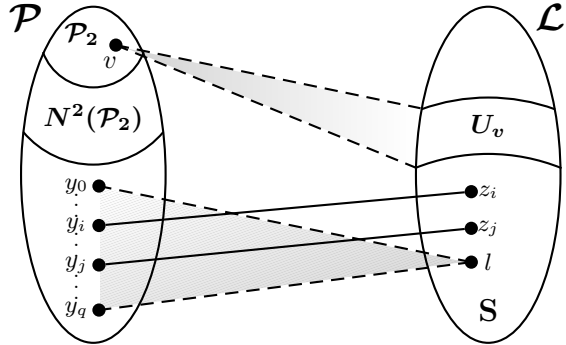
	
		\pspicture(-9.1,-1.9)(6.6,2.4)
		\centering
		\SpecialCoor
		\psset{linecolor=black, dotsize=5pt, unit=.75}
		\rput(2,0){

			\pspolygon[ linecolor=white, fillstyle=slope,slopeangle=-70,
			slopebegin=white,
			slopeend=grayclaro](-.3,-1.55)(-6.9,-.2) (-6.9,-2.4)
			
			\pspolygon[ linecolor=white, fillstyle=slope,slopeangle=0,
			slopebegin=grayclaro,
			slopeend=white] (-6.6,2.3)(-1.27,0.12)(-1.2,1.18)
			\psellipse(0,0)(1.3,3)
			
			\psline[linestyle=dashed](-.3,-1.55)(-6.9,-.2)
			\psline[linestyle=dashed](-.3,-1.55)(-6.9,-2.4)
			
			\rput(-7,0){
				\psellipse(-0,0)(1.3,3)
				
				\psarc(0,2.5){.95}{200}{340}
				\psarc(0,1.8){1.62}{220}{320}
				\rput(-.2,2.5){$ \boldsymbol{{\cal{P}}_{2}} $}
				\rput(.3,2){$v$}
				\dotnode(.4,2.3){v}
				\rput(0,1.){$\boldsymbol{ N^{2}({\cal{P}}_{2}) }$}
				\rput(-.3,-.2){\small$y_0$}
				\rput(-.3,-.4){\tiny \textbf{$\vdots$}}
				\rput(-.3,-.9){\small$y_i$}
				\rput(-.3,-1.1){\tiny \textbf{$\vdots$}}
				\rput(-.3,-1.6){\small$y_j$}
				\rput(-.3,-1.85){\tiny \textbf{$\vdots$}}
				\rput(-.3,-2.4){\small$y_q$}
				
				\dotnode(.1,-.2){y0}
				\dotnode(.1,-.9){yi}
				\dotnode(.1,-1.6){yj}
				\dotnode(.1,-2.4){yq}	
			}
			
			\dotnode(-.3,-1.55){l}
			\rput (0,-1.55){\small$l$}
			\rput (0,-2.4){$\large\textbf{S}$}
			\dotnode(-.3,-.95){zj}
			\rput (0,-.95){\small$z_j$}
			\dotnode(-.3,-0.3){zi}
			\rput (0,-0.3){\small$z_i$}
			
			\ncline{yi}{zi}
			\ncline{yj}{zj}
			
			\psarc(0,-3.1){3.47}{68}{112}			
			\psarc(0,-2.1){3.47}{70}{110}
			\rput(0,.87){$\boldsymbol{U_v}$}
			
			\rput(-8.2,2.8){\Large$\boldsymbol{{\cal{P}}}$}
			\rput(1.1,2.8){\Large$\boldsymbol{{\cal{L}}}$}
			
			\psline[linestyle=dashed](-6.6,2.3)(-1.2,1.18)
			\psline[linestyle=dashed](-6.6,2.3)(-1.27,0.12)
		}

		\endpspicture
		\caption{Case $|{\cal{P}}_2|\le q^2-1$ and ${\cal{L}}_2\neq \emptyset$.}
		\label{Fig8}
		\end{figure}
		Let $t_i=d_{S-l}(y_i)$. Since $g=8$,  $N(u)\cap N(v) = \{l\}$ for any pair of vertices  $u, v \in N(l)$. For each $v\in {\cal{P}}_2$, $U_v$ has at most one neighbour in $N(l)$,  implying that  $|N^2_{S-l}(l)|=\sum_{i=0}^q t_i \geq (q+1)q-(q^2- \delta)=q+\delta$.  Since $l$ has at least  $q+\delta$ vertices in $S-l$ at distance 2 and  ${\cal{L}}_2\subset S$, it follows that if $|{\cal{L}}_2|=k$, then $k+k(q+\delta) \le |{\cal{L}}_2 |+ |N_{S-l}^2({\cal{L}}_2)|\le |S|= (q+1)(\delta+1)$. Therefore,
		$$
		k\le \frac{(q+1)(\delta+1)}{q+\delta+1}.
		$$
		Hence, 
		$$
		|\Gamma^{-1}(2)|=k+q^2-\delta\le q^2-\delta+\frac{(q+1)(\delta+1)}{q+\delta+1}=q^2+\frac{q+1-\delta^2}{q+1+\delta}<q^2+1.
		$$
		
		Thus, $|\Gamma^{-1}(2)| \leq q^2 +1$ which implies $(q^2+1)(q-1)+3\le \chi_\rho (G)$ and the result holds.
	\end{proof}
	
	\begin{corollary}
		Let $Q$ be a generalized quadrangle of order $q$ and let $G$ be the incidence graph of $Q$. Then $Q$ contains two disjoint ovoids or two disjoint spreads if and only if $\chi_\rho (G)=(q^2+1)(q-1)+3$.
	\end{corollary}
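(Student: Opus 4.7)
The plan is to establish the two directions separately, leveraging the cardinality bounds on color classes already derived inside the proof of the previous theorem. For the ``if'' direction, suppose $Q$ contains two disjoint ovoids $O,O'\subset \mathcal{P}$; the spread case will be entirely symmetric after interchanging $\mathcal{P}$ and $\mathcal{L}$. I would exhibit an explicit packing coloring by setting $\Gamma^{-1}(1)=\mathcal{L}$, $\Gamma^{-1}(2)=O$, $\Gamma^{-1}(3)=O'$, and assigning distinct singleton colors to each of the remaining $(q-1)(q^2+1)$ points of $\mathcal{P}$. Since $\mathcal{L}$ is independent and the points of each ovoid are pairwise at distance $4$ in $G$, this $\Gamma$ is a valid packing coloring. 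It uses exactly $(q^2+1)(q-1)+3$ colors, which combined with the lower bound of the previous theorem yields $\chi_\rho(G)=(q^2+1)(q-1)+3$.

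For the ``only if'' direction, I would start from an optimal packing coloring $\Gamma$ realizing $\chi_\rho(G)=(q^2+1)(q-1)+3$. The analysis carried out in the previous theorem provides the bounds $|\Gamma^{-1}(1)|\le (q+1)(q^2+1)$, $|\Gamma^{-1}(2)|\le q^2+1$, $|\Gamma^{-1}(3)|\le q^2+1$, together with $|\Gamma^{-1}(i)|=1$ for every $i\ge 4$ (because $G$ has diameter $4$). Summing these contributions and comparing with $|V(G)|=2(q+1)(q^2+1)$ forces each of the first three inequalities to be an equality. Applying Lemma~\ref{independiente} to the resulting maximum independent set $\Gamma^{-1}(1)$ yields $\Gamma^{-1}(1)\in\{\mathcal{P},\mathcal{L}\}$; I would assume $\Gamma^{-1}(1)=\mathcal{L}$ and observe that the opposite case produces two disjoint spreads by a parallel argument. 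The classes $\Gamma^{-1}(2)$ and $\Gamma^{-1}(3)$ must then sit entirely inside $\mathcal{P}$, and since vertices of $\mathcal{P}$ lie at even distance from one another, the packing constraints upgrade both classes to sets of $q^2+1$ mutually opposite points. By the extremality asserted in Proposition~\ref{lemacamino}, each of these is an ovoid of $Q$, and they are disjoint by the very fact that they are distinct color classes.

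The main subtle point is the rigidity of the three simultaneous cardinality bounds: only the precise allocation $|\Gamma^{-1}(1)|=(q+1)(q^2+1)$ and $|\Gamma^{-1}(2)|=|\Gamma^{-1}(3)|=q^2+1$ exhausts $|V(G)|$, so no slack is left to redistribute. Once this rigidity is secured, Lemma~\ref{independiente}, together with the identification of maximum sets of pairwise distance-$4$ points with ovoids, closes the argument without further obstacles.
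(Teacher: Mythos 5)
Your proposal is correct and follows essentially the same route as the paper: an explicit coloring (color $1$ on one side of the bipartition, colors $2$ and $3$ on the two disjoint ovoids or spreads, singletons elsewhere) for sufficiency, and for necessity the forced equalities $|\Gamma^{-1}(1)|=(q+1)(q^2+1)$, $|\Gamma^{-1}(2)|=|\Gamma^{-1}(3)|=q^2+1$ combined with Lemma~\ref{independiente} to conclude that the classes $2$ and $3$ are disjoint ovoids or spreads. The only difference is that you spell out slightly more explicitly why $\Gamma^{-1}(2)$ and $\Gamma^{-1}(3)$, once confined to one part, consist of mutually opposite elements and hence form ovoids, which the paper leaves implicit.
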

	
	\begin{proof}
		Let $G=G[\PP, \LL]$  be the incidence graph of a  generalized quadrangle $Q$. If $Q$ contains two disjoint ovoids (resp. spreads), then we can color the vertices of $\mathcal{L}$ (resp. $\mathcal{P}$) with color $1$, the vertices of each ovoid (resp. spread) with colors $2$ and $3$, respectively, and the $(q^2+1)(q-1)$ remaining vertices with different colors. Therefore, $\chi_\rho (G)\le(q^2+1)(q-1)+3$  and the equality is attained.
		
		If $\chi_\rho (G)=(q^2+1)(q-1)+3$, then let $\Gamma$ be a  packing coloring attaining this number. Since $diam(G) =4$, $|\Gamma^{-1}(i)|=1$ for $i\geq 4$. Also, $|\Gamma^{-1}(1)|\leq q^3 + q^2 + q+ 1$ , $|\Gamma^{-1}(2)|\leq q^2 + 1 $ and $|\Gamma^{-1}(3)|\leq   q^2 + 1$. Since we have $(q^2+1)(q-1)$ singular classes, to achieve $\chi_\rho (G)$ the equality must hold in the previous inequalities. By Lemma \ref{independiente}, $\Gamma^{-1}(1)$ is either $\mathcal{P}$ or $\mathcal{L}$.  Therefore $\Gamma^{-1}(2)$ and $\Gamma^{-1}(3)$ must be disjoint ovoids or two disjoint spreads.
	\end{proof}
	
	For $q$ an odd prime power, every pair of ovoids intersects \cite{BS89, Barlotti}. For $q$ an even prime power, it is conjectured that every pair of ovoids intersects, but as far as the authors know, a proof is only known for  $2 \leq q \leq 64$. 
	
	\begin{corollary}
		If $G$ is a $(q+1,8)$-Moore graph with $q$ an odd prime power or $q \leq 64$ an even prime power, then
		$$\chi_\rho (G)= (q^2+1)(q-1)+4.$$
	\end{corollary}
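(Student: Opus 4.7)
The plan is to combine the preceding theorem and corollary with the stated geometric facts about ovoids in generalized quadrangles. The theorem already pins $\chi_\rho(G)$ to one of the two values $(q^2+1)(q-1)+3$ or $(q^2+1)(q-1)+4$, and the preceding corollary characterizes when the smaller value is attained: precisely when the underlying generalized quadrangle $Q$ admits two disjoint ovoids or two disjoint spreads. So the entire argument reduces to ruling out both of these possibilities under the stated hypotheses on $q$.

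First I would invoke the result of Ball--Stroppel \cite{BS89} and Barlotti \cite{Barlotti}, quoted in the paragraph just before the corollary, which asserts that for $q$ an odd prime power every two ovoids of $Q$ have nonempty intersection; for $q$ an even prime power with $q \le 64$ the same property is known (as noted in the excerpt). This immediately rules out two disjoint ovoids. Next I would observe that the notion of a generalized quadrangle of order $q$ is self-dual: interchanging points and lines produces another generalized quadrangle of order $q$, and under this duality ovoids and spreads are swapped. Consequently the non-intersection statement for ovoids transfers verbatim to spreads, so two disjoint spreads are likewise impossible under the same hypotheses on $q$.

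With both alternatives excluded, the previous corollary forces $\chi_\rho(G) \neq (q^2+1)(q-1)+3$. Combined with the interval $(q^2+1)(q-1)+3 \le \chi_\rho(G) \le (q^2+1)(q-1)+4$ furnished by the theorem, we conclude $\chi_\rho(G) = (q^2+1)(q-1)+4$, as required.

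The argument is essentially a citation-plus-duality assembly, so there is no real obstacle; the only point deserving explicit mention is the self-duality of generalized quadrangles of order $q$, which is what lets a single hypothesis on $q$ (intersection of ovoids) kill both of the structural configurations listed in the previous corollary.
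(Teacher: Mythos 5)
Your proof is correct and matches the paper's (implicit) argument: the paper states this corollary without proof, as an immediate consequence of the preceding theorem (which traps $\chi_\rho(G)$ between $(q^2+1)(q-1)+3$ and $(q^2+1)(q-1)+4$), the preceding corollary (which characterizes the lower value by the existence of two disjoint ovoids or two disjoint spreads), and the quoted fact that every pair of ovoids intersects for these values of $q$. Your explicit use of point--line duality to handle the spread case is exactly the step the paper leaves unstated, and it is the right way to fill it; the only nitpicks are that \cite{BS89} is Bagchi--Sastry (not Ball--Stroppel), and that an individual generalized quadrangle of order $q$ need not be self-dual --- what the argument actually needs is that its dual is again a generalized quadrangle of order $q$, so the ovoid-intersection result applies to it and rules out disjoint spreads of the original.
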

	
We continue studying the packing chromatic number of $(q+1,12)$-Moore graphs which are incidence graphs of generalized hexagons. First we establish upper bounds for some chromatic classes.	
		
	\begin{lemma}\label{clase4}
		Let $\Gamma$ be a packing coloring of a $(q+1,12)$-Moore graph with $q$ an odd prime power different from 5 and 7. Then $|\Gamma^{-1}(4)|\le 2q^3-2q^2+2q$.
	\end{lemma}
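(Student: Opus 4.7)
The plan is to combine the disjointness of radius-$2$ balls around the vertices of $\Gamma^{-1}(4)$ with the bipartite asymmetry of the incidence graph $G=G[\PP,\LL]$. Set $A=\Gamma^{-1}(4)\cap\PP$ and $B=\Gamma^{-1}(4)\cap\LL$. Since any two vertices of $\Gamma^{-1}(4)$ are at distance at least $5$, their closed balls of radius $2$ are pairwise disjoint.

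In the $(q+1,12)$-Moore graph, the ball of radius $2$ around a point contains $q^2+q+1$ points and $q+1$ lines, while the ball around a line contains $q+1$ points and $q^2+q+1$ lines. Counting the $\PP$-vertices consumed by the disjoint balls therefore gives
\[
|A|(q^2+q+1)+|B|(q+1)\le|\PP|=(q^3+1)(q^2+q+1),
\]
and a symmetric inequality holds for $\LL$. Adding the two produces only the coarse ball-packing estimate, whose integer part may exceed $2q^3-2q^2+2q$, so the idea is to use one inequality \emph{asymmetrically} together with a parity trick.

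Concretely, I would rewrite the displayed inequality as $|A|q^2+(|A|+|B|)(q+1)\le(q^3+1)(q^2+q+1)$ and suppose, for contradiction, that $|A|+|B|\ge 2q^3-2q^2+2q+1$. Since this number is odd, the larger of $|A|$ and $|B|$ (say $|A|$, by the obvious symmetry) satisfies $|A|\ge q^3-q^2+q+1$. Substituting $|A|+|B|=2q^3-2q^2+2q+1$ and using the identity
\[
(q^3+1)(q^2+q+1)-(2q^3-2q^2+2q+1)(q+1)=q^5-q^4+q^3+q^2-2q
\]
yields $|A|\le q^3-q^2+q+1-\frac{2}{q}$; for $q\ge 3$ this forces the integer bound $|A|\le q^3-q^2+q$, contradicting the lower bound and proving the lemma.

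The main obstacle is recognising that the plain ball-packing bound is \emph{not} sharp: it overshoots the target by one unit for essentially every odd prime power $q\ge 5$. Splitting the count into the two sides of the bipartition and exploiting the oddness of $2q^3-2q^2+2q+1$ is precisely what recovers the missing unit.
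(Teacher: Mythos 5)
Your proof is correct, and it takes a genuinely different route from the paper's. You use a pure sphere-packing count of the pairwise-disjoint radius-$2$ balls, restricted to the point side of the bipartition (a point-ball eats $q^2+q+1$ points, a line-ball eats $q+1$), and then recover the last unit by the parity of $2q^3-2q^2+2q+1$ together with the integrality of $|A|$; all the arithmetic in your displayed identity checks out, and $|A|\le q^3-q^2+q+1-\tfrac{2}{q}$ does force $|A|\le q^3-q^2+q$ for $q\ge 3$. The paper instead invokes Proposition~\ref{lemacamino} to cap each side at $q^3+1$ (an ovoid/spread), writes $|P_4|=q^3-r$, counts the disjoint second neighbourhoods $N^2(P_4)$ inside $\PP$ to bound $|L_4|\le(r+1)q+\tfrac{r+1}{q+1}$, and optimizes over $r$ via the function $f(q)=(q^4+q^3-q^2-q-1)/(q^2+2q+2)$. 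Your argument buys two things: it is more elementary and self-contained (it never uses Proposition~\ref{lemacamino}, nor the hypothesis that $q$ is an odd prime power different from $5$ and $7$ --- only that the Moore graph exists), and it avoids the delicate endgame of the paper, whose closing step ``since $f(q)\le q^2-q$'' in fact has the inequality pointing the wrong way for deducing $2q^3-2f(q)\le 2q^3-2q^2+2q$. One small point of hygiene: after supposing $|A|+|B|\ge 2q^3-2q^2+2q+1$ you ``substitute'' equality; you should instead note that the term $(|A|+|B|)(q+1)$ enters your inequality with a positive sign, so replacing $|A|+|B|$ by the lower bound $2q^3-2q^2+2q+1$ only weakens the constraint on $|A|$, and the deduction survives. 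The case $|B|\ge|A|$ is handled by the symmetric count on $\LL$, as you indicate.
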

	\begin{proof}
		Let $\HH$ be a generalized hexagon of order $q$. Let $G=G[\PP, \LL]$ be the  incidence graph of $\HH$. For  every $u,v\in \Gamma^{-1}(4)$,  $d(u,v)\ge 5$, then by Proposition \ref{lemacamino},
		$|\Gamma^{-1}(4)\cap {\cal{L}}|,|\Gamma^{-1}(4)\cap {\cal{P}}|\le q^3+1$. If $|\Gamma^{-1}(4)\cap {\cal{P}}|=q^3+1$, then $\Gamma^{-1}(4)\cap {\cal{P}}$ is an ovoid  and $|\Gamma^{-1}(4)\cap {\cal{L}}|=0$. Therefore $|\Gamma^{-1}(4)|=q^3+1$ and the result follows.
		Similarly if $|\Gamma^{-1}(4)\cap {\cal{L}}|=q^3+1$, then $\Gamma^{-1}(4)\cap {\cal{L}}$ is a spread, and the result holds.
		Let $P_4=\Gamma^{-1}(4)\cap {\cal{P}}$ and let $L_4=\Gamma^{-1}(4)\cap {\cal{L}}$.
		Assume that $|P_4|\geq|L_4|\ge 1$, and let $ r \geq0$ be an integer such that $|P_4|=q^3-r$. For $u,v\in P_4$, $d(u,v)=6$ and $N^2(u)\cap N^2(v)=\emptyset$. Hence 
	\begin{equation*}
		|N^2(P_4)|=(q+1)(q^3-r)q=q^5+q^4-r(q^2+q).
	\end{equation*}
		Let $P^{'}={\cal{P}}\setminus (P_4\cup N^2(P_4))$. Hence 
		$|P^{'}|=(r+1)(q^2+q+1)$. Since $d(u,v)=5$, for every $u\in P_4$ and $v\in L_4$, it follows that  $N(L_4)\subset P^{'}$ and $(q+1)|L_4|\le (r+1)(q^2+q+1)$. Therefore 
		$$
		|L_4|\le (r+1)q+\frac{r+1}{q+1}.
		$$
		Since   $|L_4|\le |P_4|=q^3-r$, then 
		$$
		|L_4|\le \min \left\{q^3-r,(r+1)q+\frac{r+1}{q+1}\right\}.
		$$
		Let $f(q)=(q^4+q^3-q^2-q-1)/(q^2+2q+2)$. Observe that if $r\ge f(q)$, then
		$|L_4|\le q^3-r$ and   $|\Gamma^{-1}(4)|=|P_4|+|L_4|\le 2q^3-2f(q)$.
	If  $r< f(q)$, then 
			$$
			|L_4|<r\left(q+\frac{1}{q+1}\right)+q+\frac{1}{q+1}
			$$
			and
			\begin{eqnarray*}
				|\Gamma^{-1}(4)|& = &|P_4|+|L_4|<q^3-r +r\left(q+\frac{1}{q+1}\right)+q+\frac{1}{q+1}\\
				& = & q^3+q+\frac{1}{q+1}+r\left(q-1+\frac{1}{q+1}\right)\\
				& = & 2q^3-2\frac{q^4-q^3+q^2-q-q}{q^4+2q^2+2}\\
				& = & 2q^3-2f(q).
			\end{eqnarray*}
		In both cases $|\Gamma^{-1}|(4)|\le 2q^3-2f(q)$. Since $f(q)\le q^2-q$ for $q\ge 2$, the result follows. The case when $|L_4|\geq |P_4|$ is proved similarly. 	\end{proof}
	
	A \emph{distance-$2$ ovoid} of a generalized $n$-gon is a subset of the point set with the property that every line contains exactly one point of that subset.  Dual notion is that of a \emph{distance-$2$ spread}. From this definition it follows that a distance-$2$ ovoid or distance-$2$ spread of a generalized hexagon has $q^4+q^2+1$ elements.

	\begin{lemma}\label{clase3}
		Let $\Gamma$ be a packing coloring of a $(q+1,12)$-Moore graph. Then $|\Gamma^{-1}(3)|\le q^4+q^2+1$ and the equality holds if $\Gamma^{-1}(3)$ is a distance-$2$  ovoid or a distance-$2$ spread.
	\end{lemma}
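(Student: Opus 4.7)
The plan is to write $\Gamma^{-1}(3)$ as a disjoint union $S_P\cup S_L$ with $S_P\subset\PP$, $S_L\subset\LL$, and to bound $|S_P|+|S_L|$ via a weighted double count over the lines of the hexagon. Since vertices of color $3$ must lie at distance at least $4$ in $G$, I would first extract three structural facts from the bipartite, girth-$12$ incidence graph: (a) the points of $S_P$ are pairwise non-collinear; (b) the lines of $S_L$ are pairwise non-concurrent; and (c) for every $p\in S_P$ and $\ell\in S_L$ we have $d_G(p,\ell)=5$ (the only odd value between $4$ and the diameter), so $p\notin \ell$ and no line through $p$ meets $\ell$.

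The central idea is to define a single weight $w:\LL\to\mathbb{R}$ that simultaneously accounts for $S_P$ and $S_L$. I would set $w(\ell)=1$ if $\ell\in S_L$, $w(\ell)=1/(q+1)$ if $\ell$ contains some point of $S_P$, and $w(\ell)=0$ otherwise. By (c) these three cases are mutually exclusive, and by (a) each line contains at most one point of $S_P$, so $w$ is well defined. Collecting the $(q+1)|S_P|$ lines through points of $S_P$ and the $|S_L|$ lines of $S_L$ gives
\[
\sum_{\ell\in\LL}w(\ell)=|S_L|+\frac{(q+1)|S_P|}{q+1}=|\Gamma^{-1}(3)|.
\]

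The heart of the proof is the star inequality $\sum_{\ell\ni u} w(\ell)\le 1$ for every $u\in\PP$, which I would verify by cases. If $u\in S_P$, every one of the $q+1$ lines through $u$ has weight $1/(q+1)$, and by (c) none lies in $S_L$, so the sum equals $1$. If $u\notin S_P$ but $u$ lies on some $\ell_0\in S_L$, then (b) forces $\ell_0$ to be unique, and any other line $\ell\ni u$ carrying a point $p'\in S_P$ would yield the length-$3$ path $p'\text{--}\ell\text{--}u\text{--}\ell_0$, contradicting $d_G(p',\ell_0)\ge 5$; hence only $w(\ell_0)=1$ contributes. In the remaining case each of the $q+1$ lines through $u$ contributes at most $1/(q+1)$.

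Summing the star inequalities over the $(q+1)(q^4+q^2+1)$ points of $\PP$ and using that every line is incident to exactly $q+1$ points yields $(q+1)\sum_\ell w(\ell)\le|\PP|=(q+1)(q^4+q^2+1)$, whence $|\Gamma^{-1}(3)|\le q^4+q^2+1$. The achievability part is then immediate: a distance-$2$ ovoid is a set of $q^4+q^2+1$ points meeting every line exactly once, so its elements are pairwise non-collinear and therefore pairwise at distance at least $4$ in $G$, making it a valid colour class; the distance-$2$ spread case is dual. The main obstacle is the second case of the star inequality, which is the only place the cross-part distance-$5$ condition (c) is genuinely used; everything else reduces to a routine incidence count.
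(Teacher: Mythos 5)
Your proof is correct, and it takes a genuinely different route from the paper's. The paper works with $P_3=\Gamma^{-1}(3)\cap\PP$ and $L_3=\Gamma^{-1}(3)\cap\LL$ directly as vertex sets: it shows that $P_3$, $N^2(P_3)$ and $N(L_3)$ are pairwise disjoint subsets of $\PP$ (and dually for $\LL$), proves $|N^2(P_3)|\ge q|P_3|$ by a degree count in an auxiliary bipartite graph (a vertex can see at most $q+1$ members of $P_3$ at distance $2$, one through each neighbour, or two of them would be at distance $2$ from each other), and adds the two resulting inequalities to get $(2q+2)\bigl(|P_3|+|L_3|\bigr)\le |V(G)|$. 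You instead encode both halves of the colour class into a single fractional weight on $\LL$ (weight $1$ on lines of $S_L$, weight $1/(q+1)$ on lines through a point of $S_P$), verify that the total weight equals $|\Gamma^{-1}(3)|$ and that every point-star carries weight at most $1$, and then double-count flags on the point side only. Both arguments are elementary double counts yielding the same bound $|V(G)|/(2q+2)=|\PP|/(q+1)=q^4+q^2+1$; yours avoids the second-neighbourhood estimate and the auxiliary graph entirely, at the cost of the slightly more delicate case analysis in the star inequality (where the cross-part distance condition $d(p,\ell)\ge 5$ is the essential input, exactly as the disjointness of $N^2(P_3)$ and $N(L_3)$ is in the paper). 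Your treatment of the equality clause via the defining property of distance-$2$ ovoids and spreads matches what the paper leaves implicit.
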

	\begin{proof}
		Let $\HH$ be a generalized hexagon of order $q$. Let $G=G[\PP, \LL]$ be the  incidence graph of $\HH$. For every $u,v\in \Gamma^{-1}(3)$, $d(u,v)\ge 4$.
		Let $P_3=\Gamma^{-1}(3)\cap {\cal{P}}$ and let $L_3=\Gamma^{-1}(3)\cap {\cal{L}}$.
		Observe that 
		$|P_3|+|N^2(P_3)|+|N(L_3)|\le |{\cal{P}}|$ and $|L_3|+|N^2(L_3)|+|N(P_3)|\le |{\cal{L}}|$.
		Therefore
		\begin{equation}\label{ecuacionclase3}
		|P_3|+|L_3|+|N^2(P_3)|+|N^2(L_3)|+|N(P_3)|+|N(L_3)|\le |{\cal{P}}|  +|{\cal{L}}|=|V(G)|.
		\end{equation}

		Let $B$ be the bipartite graph constructed as follows: $V(B)=P_3\cup N^2(P_3)$ and for $x\in P_3$ and $y\in N^2(P_3)$, $xy\in E(B)$ if and only if $y\in N^2(x)$.	
		Observe that for every $x\in P_3$, $d_B(x)=q(q+1)$. On the other hand, let $y\in N^2(P_3)$. If $d_B(y)\geq q+2$, since $d_G(y)=q+1$ there exists at least two vertices $x_1, x_2 \in P_3\cap N^2(y)$ such that $d(x_1,x_2)=2$, a contradiction. Hence $d_B(y)\le q+1$ and
		$$
		|E(B)|=|P_3|q(q+1)=\sum_{y\in N^2(P_3)}d(y)\le |N^2(P_3)|(q+1),
		$$
		implying that $|N^2(P_3)|\ge |P_3|q$.	Analogously, $|N^2(L_3)|\ge |L_3|q$.
		Since $G$ is $(q+1)$-regular, $|N(P_3)|=|P_3|(q+1)$ and $|N(L_3)|=|L_3|(q+1)$.		By (\ref{ecuacionclase3}), 
		\begin{eqnarray*}
			|V(G)| & \ge & |P_3|+|L_3|+|N^2(P_3)|+|N^2(L_3)|+|N(P_3)|+|N(L_3)|\\
			& \ge & |P_3|+|L_3|+(|P_3|+|L_3|)q+(|P_3|+|L_3|)(q+1)\\
			& = & (|P_3|+|L_3|)(2q+2).
		\end{eqnarray*}
		Therefore $|P_3|+|L_3|\le q^4+q^2+1$.
	\end{proof}

	\begin{lemma}\label{clase2}
		Let $\Gamma$ be a packing coloring of a $(q+1,12)$-Moore graph. Then $|\Gamma^{-1}(2)|\le 2\dfrac{q+1}{q+2}(q^4+q^2+1)$. 
	\end{lemma}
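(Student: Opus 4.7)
The plan is to split $\Gamma^{-1}(2)$ according to the bipartition and double-count edges, as in the previous lemmas, but exploiting the weaker-than-color-$3$ distance constraint in the cleanest possible way. Write $P_2=\Gamma^{-1}(2)\cap\mathcal{P}$ and $L_2=\Gamma^{-1}(2)\cap\mathcal{L}$.

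The first observation is that two vertices in $P_2$ lie in the same part of the bipartition, so their distance is even; since any two color-$2$ vertices must be at distance at least $3$, vertices in $P_2$ are pairwise at distance at least $4$ in $G$. Because $g=12$, this has the consequence that no line can be adjacent to two distinct points of $P_2$ (otherwise those two points would be at distance $2$). Therefore the neighborhoods of the vertices of $P_2$ are pairwise disjoint and $|N(P_2)|=(q+1)|P_2|$. The same holds for $L_2$: $|N(L_2)|=(q+1)|L_2|$.

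The second ingredient is that a point of $P_2$ and a line of $L_2$ are at distance at least $3$, hence in particular non-adjacent. This gives $N(P_2)\cap L_2=\emptyset$ and $N(L_2)\cap P_2=\emptyset$. Combining these two facts with $|\mathcal{P}|=|\mathcal{L}|=(q+1)(q^4+q^2+1)$, I would deduce
\begin{equation*}
(q+1)|P_2|=|N(P_2)|\le |\mathcal{L}|-|L_2|=(q+1)(q^4+q^2+1)-|L_2|,
\end{equation*}
together with the symmetric inequality obtained by swapping the roles of $P_2$ and $L_2$.

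Adding the two inequalities gives $(q+2)(|P_2|+|L_2|)\le 2(q+1)(q^4+q^2+1)$, which is exactly the claimed bound. No obstacle is anticipated: the only subtle point is to remember that the distance constraint within $P_2$ (respectively $L_2$) upgrades from $\ge 3$ to $\ge 4$ because same-part distances are even, which is what makes the ``each line is hit at most once by $P_2$'' step go through using the girth.
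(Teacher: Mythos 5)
Your argument is correct and is essentially the same as the paper's: the two inequalities $(q+1)|P_2|\le|\mathcal{L}|-|L_2|$ and $(q+1)|L_2|\le|\mathcal{P}|-|P_2|$ are exactly the paper's counting $|L_2|+|N(P_2)|\le|\mathcal{L}|$ and $|P_2|+|N(L_2)|\le|\mathcal{P}|$, summed in the same way. (Minor remark: the disjointness of the neighborhoods already follows from the distance being at least $3$, so neither the upgrade to distance $4$ nor the girth is actually needed for that step.)
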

	\begin{proof}
		Let $\HH$ be a generalized hexagon of order $q$. Let $G=G[\PP, \LL]$ be the  incidence graph of $\HH$. For every $u,v\in \Gamma^{-1}(2)$, $d(u,v)\ge 3$.
		Let $P_2=\Gamma^{-1}(2)\cap {\cal{P}}$ and let $L_2=\Gamma^{-1}(2)\cap {\cal{L}}$.
		Observe that 
		$|P_2|+|N(L_2)|\le |{\cal{P}}|$ and $|L_2|+|N(P_2)|\le |{\cal{L}}|$.
		Therefore
		\begin{equation}\label{ecuacionclase2}
		|P_2|+|L_2|+|N(P_3)|+|N(L_3)|\le |{\cal{P}}|  +|{\cal{L}}|=|V(G)|.
		\end{equation}
	Since $G$ is $(q+1)$-regular,  $|N(P_2)|=|P_2|(q+1)$ and $|N(L_2)|=|L_2|(q+1)$.		By (\ref{ecuacionclase2}),
		\begin{eqnarray*}
			|V(G)| & \ge & |P_2|+|L_2|+|N(P_2)|+|N(L_2)|\\
			& \ge & |P_2|+|L_2|+(|P_2|+|L_2|)(q+1)\\
			& = & (|P_3|+|L_3|)(q+2).
		\end{eqnarray*}
		Therefore,
		$$
		|\Gamma^{-1}(2)|=|P_2|+|L_2|\le 2\dfrac{q+1}{q+2}(q^4+q^2+1).
		$$	\end{proof}
	
	\begin{theorem}\label{hexagons}
		Let $G$ be a $(q+1,12)$-Moore graph with $q$ an odd prime power $q\ge 9$, then
		$$
		q^5-2q^4-4q^2+9q-18+\frac{42}{q+2}\le \chi_\rho (G)\le q^5+q^4-2q^3-q^2+4.
		$$ 
	\end{theorem}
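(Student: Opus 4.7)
The plan splits into a lower-bound piece, which is essentially bookkeeping given the three preceding lemmas, and an upper-bound construction, which is where the geometric work lies.

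For the lower bound, let $\Gamma$ be an optimal packing coloring and sum $n=\sum_{i\ge 1}|\Gamma^{-1}(i)|$, where $n=2(q^5+q^4+q^3+q^2+q+1)$. Since $G$ is bipartite and $(q+1)$-regular, Lemma \ref{independiente} gives $|\Gamma^{-1}(1)|\le\beta(G)=n/2$. Because $\mathrm{diam}(G)=6$ and distances between vertices from opposite bipartition classes are odd, two color-$5$ vertices must lie in the same part at pairwise distance exactly $6$; by Proposition \ref{lemacamino} (which is where the hypotheses $q\ge 9$ odd prime power enter) we have $|\Gamma^{-1}(5)|\le q^3+1$. All classes with $i\ge 6$ are singletons. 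Combining these facts with Lemmas \ref{clase2}, \ref{clase3}, \ref{clase4} and solving for $\chi_\rho(G)$ gives
$$\chi_\rho(G)\ge 5+\frac{n}{2}-\frac{2(q+1)(q^4+q^2+1)}{q+2}-(q^4+q^2+1)-(2q^3-2q^2+2q)-(q^3+1).$$
Since $(q+1)(q^4+q^2+1)=q^5+q^4+q^3+q^2+q+1=n/2$, the fractional term equals $n/(q+2)$; polynomial division yields $n/(q+2)=2q^4-2q^3+6q^2-10q+22-\frac{42}{q+2}$, and collecting the remaining polynomial terms produces exactly the claimed lower bound $q^5-2q^4-4q^2+9q-18+\frac{42}{q+2}$.

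For the upper bound I would exhibit an explicit packing $k$-coloring with $k=q^5+q^4-2q^3-q^2+4$. The construction is: color $1$ covers all of $\mathcal{L}$ (an independent set of size $n/2$); colors $4$ and $5$ cover two disjoint ovoids $\mathcal{O}_4,\mathcal{O}_5\subset\mathcal{P}$, each of size $q^3+1$; colors $2$ and $3$ cover two further disjoint subsets of $\mathcal{P}\setminus(\mathcal{O}_4\cup\mathcal{O}_5)$ whose pairwise distances are at least $3$ and $4$ respectively (equivalently, two disjoint partial distance-$2$ ovoids, since on one side of the bipartition distances are even), with sizes summing to $q(q+1)^2=q^3+2q^2+q$; and every remaining point of $\mathcal{P}$ receives its own new color. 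The count is then $k=5+n/2-2(q^3+1)-q(q+1)^2=q^5+q^4-2q^3-q^2+4$, as required, and each class satisfies the required packing constraint by construction.

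The principal obstacle is the upper bound: even though Proposition \ref{lemacamino} guarantees the existence of one ovoid, producing two disjoint ovoids and then, on their complement in $\mathcal{P}$, two disjoint partial distance-$2$ ovoids of combined cardinality $q(q+1)^2$ is a genuinely geometric task which relies on explicit coordinate models of the generalized hexagon (available because $q\ge 9$ is an odd prime power). By contrast, the lower bound reduces, once the three class-size lemmas are in hand, to routine polynomial arithmetic.
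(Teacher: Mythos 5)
Your lower bound is essentially the paper's argument and is correct: bound $|\Gamma^{-1}(1)|$ by $\beta(G)=n/2$, bound $|\Gamma^{-1}(5)|$ by $q^3+1$ via Proposition \ref{lemacamino} (this is where $q\ge 9$ odd enters, since the proposition excludes $q=5,7$), invoke Lemmas \ref{clase4}, \ref{clase3}, \ref{clase2} for classes $4,3,2$, and count singular classes. Your polynomial division is right, and in fact your $+\tfrac{42}{q+2}$ agrees with the theorem statement (the paper's intermediate display for $|S|$ carries a sign typo on that term). So this half is fine.

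The upper bound, however, has a genuine gap, and it is exactly the part you flag as "the principal obstacle": you never produce the objects your coloring needs. Your construction requires (i) \emph{two disjoint ovoids} of the generalized hexagon for colors $4$ and $5$, and (ii) two further disjoint sets of points, pairwise at distance at least $4$, of combined size $q^3+2q^2+q$, avoiding both ovoids. Proposition \ref{lemacamino} guarantees only \emph{one} ovoid, and the existence of two disjoint ovoids in a generalized hexagon is not established anywhere in the paper nor in general in the literature; note that the paper itself records that for generalized quadrangles with $q$ odd \emph{every} pair of ovoids intersects, which is precisely why the $g=8$ answer is $(q^2+1)(q-1)+4$ rather than $+3$. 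So "take two disjoint ovoids" is not an available move, and the sizes you assign to colors $2$ and $3$ are reverse-engineered from the target count rather than constructed. The paper's actual route is different and avoids this entirely: it uses a single ovoid $\mathcal{O}$ for color $5$, then fixes an edge $xy$ with $x\in\mathcal{O}$ and the associated labeled spanning tree $T$ of depth $5$, and selects the color-$2$, color-$3$ and color-$4$ vertices explicitly from the blocks $B^j_i$ at odd levels of $T$ (two vertices per block for colors $2$ and $3$, and $q$ vertices pairwise at distance $6$ for color $4$), verifying the required pairwise distances combinatorially through tree distances. If you want to salvage your plan you would have to either prove the disjoint-ovoid statement for these hexagons (hard, and likely false in the same way as for quadrangles) or replace it with an explicit selection argument of the paper's type.
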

	\begin{proof}
		Let $q\geq 9$ be an odd prime power,  let $G$ be a  $(q+1,12)$-Moore graph and let $\Gamma$ be a packing coloring of $G$. Since $diam(G)=6$, $\Gamma^{-1}(i)$ is singular, for $i\ge 6$.
		For the lower bound observe that by  Proposition \ref{lemacamino}, $\Gamma^{-1}(5)\le q^3+1$. Let $S$ be the set of singular classes of $\Gamma$, then
	$$
			|S| \ge  |V(G)|-\sum_{i=1}^5|\Gamma^{-1}(i)|.
		$$
		Using that $\Gamma^{-1}(1)$ is an independent set and  Lemmas \ref{clase4}, \ref{clase3} and \ref{clase2}, it follows that
		$$
		|S|\ge q^5-2q^4-4q^2+9q-23-\frac{42}{q+2}.
		$$
		Therefore, $\chi_\rho (G)\ge |S|+5$ and the lower bound holds.
		
		For the upper bound we give a packing chromatic coloring of a  $(q+1,12)$-Moore graph.
		Consider an ovoid $\cal{O}$  of $G$ (which exist by Proposition \ref{lemacamino}) and assign color $5$ to the vertices of $\cal{O}$. Let $x\in {\cal{O}}$ and let $y\in N(x)$.
		Let $T$ be the spanning tree such that every vertex of $T$ is at distance at most 
		$(g-2)/2$ from the edge $xy$, where $x$ and $y$ are labeled by $(0,0)_0$ and $(0,0)_1$, respectively.  The vertex set of $T$ is labeled by $(i,j)_k$ where $k\in \mathbb{Z}_2$ and the subindices are taken in $\mathbb{Z}_2$, $j$ denotes the distance between the edge $xy$ and the vertex $(i,j)_k$ and $0\le i \le q^{j}-1$. The adjacencies of $(0,0)_k$ in $T$ are $(i,1 )_{k+1}$ for $0 \le i < q$ and $(0,0)_{k+1}$. The adjacencies of $(i,j)_k$ in $T$ are the vertices labelled by 
		$(iq+r,j+1)_{k+1}$ where $0\le r < q$ and the vertex labelled by $(i',j-1)_{k+1}$ where $i=i'q+r$ and $0\le r < q$. 
		For $i\in \{0,1,\dots , q^{\alpha}-1\}$, let $B^{\alpha}_i=\{(iq+r,\alpha)_0 : 0\le r<q\}$. Observe that ${\cal{O}}\subseteq (0,0)_0\bigcup \left(\cup^{q^4-1}_{i=0}B^5_1\right)$. A general scheme of such labeled tree can be seen in Figure  \ref{FigArbol}. 
		
\begin{figure}[h!]
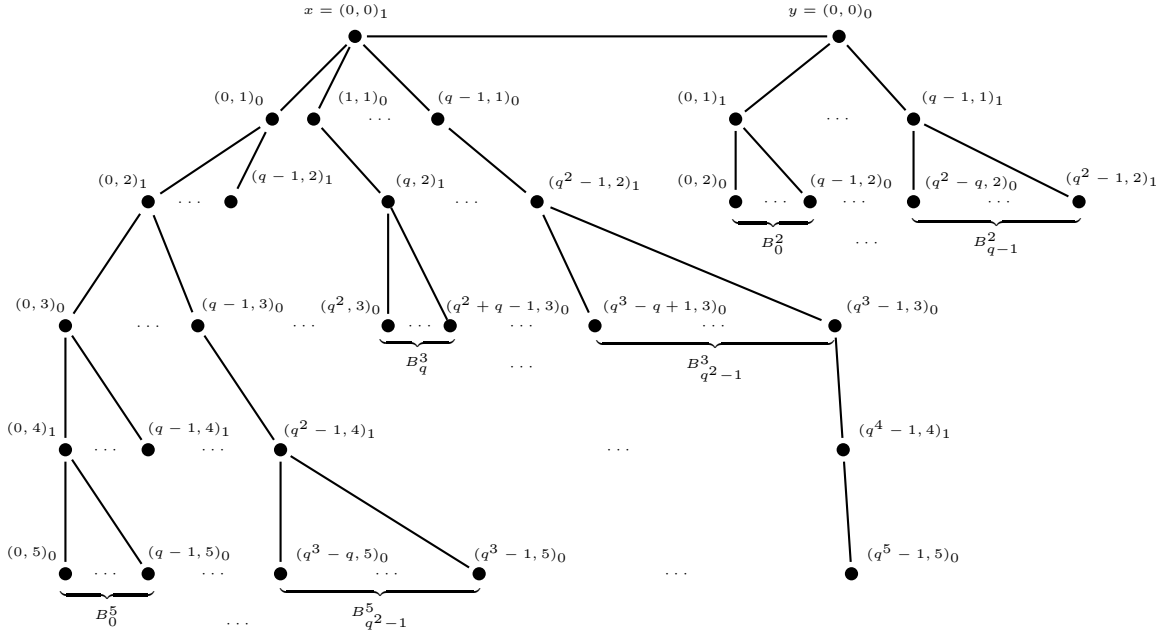


	\pspicture(-13.3,-4.3)(7.1,3.8)
	\centering
	\SpecialCoor
	\psset{linecolor=black, dotsize=5pt, unit=1.1}
	\rput(-.55,1){
		\dotnode(-7,2){000}
		\rput(-7.1,2.3){\tiny $x=(0,0)_1$}
		\dotnode(-1.15,2){001}
		\rput(-1.25,2.3){\tiny $y=(0,0)_0$}

		\ncline[nodesep=2pt]{001}{000}
		
		\dotnode(-8,1){011}
		\rput(-8.4,1.25){\tiny $(0,1)_0$}

		\dotnode(-7.5,1){211}
		\rput(-6.9,1.25){\tiny $(1,1)_0$}	
			\rput(-6.7,1){\tiny $\ldots$}		
		
		\dotnode(-6,1){111}
		\rput(-5.5,1.25){\tiny $(q-1,1)_0$}		
		
		\ncline[nodesep=2pt]{000}{011}
		\ncline[nodesep=2pt]{000}{111}
		\ncline[nodesep=2pt]{000}{211}
		
		\dotnode(-9.5,0){020}
		\rput(-9.8,0.25){\tiny $(0,2)_1$}
		
		\rput(-9,0){\tiny $\ldots$}
		
		\dotnode(-8.5,0){120}
		\rput(-7.75,0.3){\tiny$(q-1,2)_1$}	
		
		\dotnode(-6.6,0){q21}
		\rput(-6.2,0.25){\tiny$(q,2)_1$}

		\ncline[nodesep=2pt]{211}{q21}

		\rput(-5.65,0){\tiny $\ldots$}

		\dotnode(-4.8,0){420}
		\rput(-4.1,0.25){\tiny $(q^2-1,2)_1$}

		\ncline[nodesep=3pt]{011}{020}
		\ncline[nodesep=3pt]{011}{120}
		\ncline[nodesep=3pt]{011}{220}
		\ncline[nodesep=3pt]{111}{320}
		\ncline[nodesep=3pt]{111}{420}
		\ncline[nodesep=3pt]{111}{520}
		\ncline[nodesep=3pt]{211}{620}
		\ncline[nodesep=3pt]{211}{720}
		\ncline[nodesep=3pt]{211}{820}
		
		\dotnode(-10.5,-1.5){031}
		\rput(-10.8,-1.25){\tiny $(0,3)_0$}

\rput(-9.5,-1.5){\tiny $\ldots$}
\dotnode(-8.9,-1.5){231}
\rput(-8.35,-1.25){\tiny $(q-1,3)_0$}
		
		\ncline[nodesep=3pt]{020}{031}
		\ncline[nodesep=3pt]{020}{131}
		\ncline[nodesep=3pt]{020}{231}

		\ncline[nodesep=3pt]{220}{631}
		\ncline[nodesep=3pt]{220}{731}
		\ncline[nodesep=3pt]{220}{831}
		\rput(-7.6,-1.5 ){\tiny $\cdots$}
		\dotnode(-6.6,-1.5){930}
		\rput(-7.05,-1.25){\tiny $(q^2,3)_0$}
		
		\ncline[nodesep=2pt]{q21}{930}
	
	\rput(-6.2,-1.5){\tiny $\cdots$}
	
	\dotnode(-5.85,-1.5){q2+q-1}
	\ncline{q21}{q2+q-1}
	\rput(-5.15,-1.25){\tiny $(q^2 +q-1,3)_0$}
	
		\rput(-6.25,-1.85){\tiny $\underbrace{\ \ \ \ \ \ \ \ \ \ \ \ }_{B_q^3} $}
	\rput(-4.97,-1.5){\tiny $\cdots$}
		\dotnode(-4.1,-1.5){1331}
		\rput(-3.25,-1.25){\tiny $(q^3-q+1,3)_0$}
\rput(-2.65,-1.5){\tiny $\cdots$}
				\dotnode(-1.2,-1.5){1431}
		\rput(-0.5,-1.25){\tiny $(q^3-1,3)_0$}
		
		\ncline[nodesep=3pt]{420}{1231}
		\ncline[nodesep=3pt]{420}{1331}
		\ncline[nodesep=3pt]{420}{1431}
		
\rput(-2.65,-1.9){\tiny $\underbrace{ \ \ \ \ \ \ \ \ \ \ \ \ \ \ \ \ \ \ \ \ \ \ \ \ \ \ \ \ \ \ \ \ \ \ \ \ \ \ }_{B_{q^2-1}^3} $}

		\rput(-4.97,-2){\tiny $\cdots$}

		\dotnode(-10.5,-3){041}
		\rput(-10.9,-2.75){\tiny $(0,4)_1$}
		\rput(-10,-3){\tiny $\cdots$}
		\dotnode(-9.5,-3){841}
		\rput(-9,-2.75){\tiny $(q-1,4)_1$}
		
		\ncline[nodesep=2pt]{031}{041}
		\ncline[nodesep=2pt]{031}{841}

		\rput(-8.7,-3){\tiny $\cdots$}
		\dotnode(-7.9,-3){1741}
		\rput(-7.3,-2.75){\tiny $(q^2-1,4)_1$}
		\ncline[nodesep=2pt]{231}{1741}

	\rput(-3.8,-3){\tiny $\cdots$}
	
		\dotnode(-1.1,-3){4441}
		\rput(-.35,-2.75){\tiny  $(q^4-1,4)_1$}
				\ncline[nodesep=2pt]{1431}{4441}

	\dotnode(-10.5,-4.5){050}
\rput(-10.9,-4.25){\tiny $(0,5)_0$}
\rput(-10,-4.5){\tiny $\cdots$}
\dotnode(-9.5,-4.5){q-150}
\rput(-9,-4.25){\tiny $(q-1,5)_0$}

		\ncline[nodesep=2pt]{041}{050}
		\ncline[nodesep=2pt]{041}{q-150}
		
	\rput(-8.7,-4.5){\tiny $\cdots$}
		
		\dotnode(-7.9,-4.5){5450}
		\rput(-7.15,-4.25){\tiny $(q^3-q,5)_0$}
		\rput(-6.6,-4.5){\tiny $\cdots$}

		\ncline[nodesep=2pt]{1741}{5450}

		\dotnode(-5.5,-4.5){8150}
		\rput(-5.0,-4.25){\tiny $(q^3-1,5)_0$}
		
				\ncline[nodesep=2pt]{1741}{8150}

		\rput(-3.1,-4.5){\tiny $\cdots$}

		\dotnode(-1.,-4.5){13550}
		\rput(-.25,-4.25){\tiny $(q^5-1,5)_0$}
		
				\ncline[nodesep=2pt]{4441}{13550}
		
		\rput(-10,-4.9){\tiny $ \underbrace{\ \ \ \ \ \ \ \ \ \ \ \ \ \ \ }_{B_0^5} $}
		
		\rput(-8.4,-5.1){\tiny $ \cdots $}
		
		\rput(-6.7,-4.9){\tiny $ \underbrace{\ \ \ \ \   \ \ \ \ \ \ \ \ \ \ \ \ \ \ \ \ \ \ \ \ \ \ \ \ \  \ \ }_{B_{q^2-1}^5} $}
		
		\dotnode(-2.4,1){010}
		\rput(-2.8,1.25){\tiny $(0,1)_1$}

		\rput(-1.15,1){\tiny $\cdots$}
		
			\dotnode(-.25,1){110}
		\rput(.35,1.25){\tiny $(q-1,1)_1$}
		
		\ncline[nodesep=2pt]{001}{010}
		\ncline[nodesep=2pt]{001}{110}

				\dotnode(-2.4,0){021}
		\rput(-2.8,0.25){\tiny $(0,2)_0$}
		
		\rput(-1.9,0){\tiny $\cdots$}
		
		\dotnode(-1.5,0){121}
		\rput(-1.0,0.25){\tiny $(q-1,2)_0$}
		
		\rput(-1.95,-.4){\tiny $ \underbrace{\ \ \  \ \ \ \ \ \ \ \  \ \ }_{B_{0}^2} $}
		
		\ncline[nodesep=2pt]{021}{010}
		\ncline[nodesep=2pt]{121}{010}
		\rput(-.95,0){\tiny $\cdots$}
		
						\dotnode(-.25,0){221}
		\rput(.45,0.25){\tiny $(q^2-q,2)_0$}

		\rput(.75,-.4){\tiny $ \underbrace{\ \ \  \ \ \ \ \ \ \ \ \ \ \ \ \ \ \ \ \ \ \ \ \ \  \ \ }_{B_{q-1}^2} $}
		
		\rput(-.8,-.5){\tiny $\cdots$}
		
		\rput(.8,0){\tiny $\cdots$}
		\dotnode(1.75,0){321}
		\rput(2.15,0.3){\tiny $(q^2-1,2)_1$}
		
		\ncline[nodesep=2pt]{221}{110}
		\ncline[nodesep=2pt]{321}{110}
		
	}
	\endpspicture
	\caption{The labeled spanning tree of a $(q+1,12)$-Moore graph. }
	\label{FigArbol}
\end{figure}	
		
		The vertices of color $1$ are those labelled by $(i,j)_1$.		Assign color $2$ to vertex $(0,1)_0$. For every $i\in \{0, \dots, q^2-1\}$ the set $B^5_i$ contains at least $q-1\ge 3$ vertices that are not in ${\cal{O}}$. 
		Hence, we can choose two vertices of $B^5_i$ and color them with colors 2 and 3. For $a,b\in \{0,\dots , q^3-1\}$  with $|b-a|\ge q$, it follows that  $(a,5)_0\in B^5_i$ and $(b,5)_0\in B^5_j$ where $i\neq j$. 
		Therefore, the distance in $T$ between $(a,5)_0$ and $(b,5)_0$ is 4 or 6. And their distance in $G$ is at least  $4$.
		
		Similarly, for every $q\le i \le q^2-1$ we can choose two vertices of $B^3_i$ and color them with colors 2 and 3. 
		For $a,b\in \{q^2,\dots , q^3-1\}$ with $|b-a|\ge q$, it follows that   $(a,3)_0\in B^3_i$ and $(b,3)_0\in B^3_j$ where $i\neq j$. 
		Therefore the distance in $T$ between $(a,3)_0$ and $(b,3)_0$ is 4 or 6. And their distance in $G$ is at least  $4$.
		Moreover, the distance in $T$ between $(a,3)_0$ and $(b,5)_0$, for every $q^2\le a \le q^3-1$ and $0\le b \le q^3-1$ is $8$, and the distance between them in $G$ is at least $4$.
		Observe that for $0\le i \le q-1$,  $B^2_i\cap {\cal{O}}=\emptyset$.
		For every $0\le i \le q-1$ we can choose two vertices of $B^2_i$ and color them with colors 2 and 3. 
		Observe that for $a,b\in \{0,\dots , q^2-1\}$ such that $|b-a|\ge q$,  $(a,2)_0\in B^2_i$ and $(b,2)_0\in B^2_j$ with $i\neq j$. 
		Therefore the distance in $T$ between $(a,2)_0$ and $(b,2)_0$ is 4. And their distance in $G$ is $4$.
		Moreover, the distance in $T$ between $(a,2)_0$ and $(b,i)_0$, for $i\in \{1,3,5\}$ is 4,6 or 8, implying that they are at least at  distance $4$ in $G$.
		
		For every set $B^5_{iq}$ with $0\le i<q$, we can choose a vertex and assign  it  color  $4$. Observe that the distance between these vertices is $6$.
		
		Hence, for this  coloring we have $|\Gamma^{-1}(1)|=q^5+q^4+q^3+q^2+q+1$, 
		$|\Gamma^{-1}(2)|=q^3+q^2+1$,
		$|\Gamma^{-1}(3)|=q^3+q^2$,
		$|\Gamma^{-1}(4)|=q$,
		$|\Gamma^{-1}(5)|=q^3+1$  and the upper bound holds.
	\end{proof}

Finally, for the $(3,12)$-Moore graph and  the $(4,12)$-Moore graph, we computed an upper bound for the packing chromatic number using the backtracking function in the YAGS \cite{YAGS} package for GAP. 
For $(4,12)$-Moore graph we used the Adjacency matrix  from \cite{wwwdistreg}.
For  $(3,12)$-Moore graph we used the following definition in YAGS 

\noindent{\tiny 
g:=GraphByAdjacencies([ [ 64, 65, 78 ], [ 65, 66, 99 ], [ 66, 67, 84 ], [ 67, 68, 74 ], [ 68, 69, 95 ], [ 69, 70, 80 ], [ 70, 71, 76 ], [ 71, 72, 106 ], 
  [ 64, 72, 73 ], [ 73, 74, 87 ], [ 74, 75, 108 ], [ 75, 76, 93 ], 
  [ 76, 77, 83 ], [ 77, 78, 104 ], [ 78, 79, 89 ], [ 79, 80, 85 ], 
  [ 80, 81, 115 ], [ 73, 81, 82 ], [ 82, 83, 96 ], [ 83, 84, 117 ], 
  [ 84, 85, 102 ], [ 85, 86, 92 ], [ 86, 87, 113 ], [ 87, 88, 98 ], 
  [ 88, 89, 94 ], [ 89, 90, 124 ], [ 82, 90, 91 ], [ 91, 92, 105 ], 
  [ 92, 93, 126 ], [ 93, 94, 111 ], [ 94, 95, 101 ], [ 95, 96, 122 ], 
  [ 96, 97, 107 ], [ 97, 98, 103 ], [ 70, 98, 99 ], [ 91, 99, 100 ], 
  [ 100, 101, 114 ], [ 72, 101, 102 ], [ 102, 103, 120 ], [ 103, 104, 110 ], 
  [ 68, 104, 105 ], [ 105, 106, 116 ], [ 106, 107, 112 ], [ 79, 107, 108 ], 
  [ 100, 108, 109 ], [ 109, 110, 123 ], [ 81, 110, 111 ], [ 66, 111, 112 ], 
  [ 112, 113, 119 ], [ 77, 113, 114 ], [ 114, 115, 125 ], [ 115, 116, 121 ], 
  [ 88, 116, 117 ], [ 109, 117, 118 ], [ 69, 118, 119 ], [ 90, 119, 120 ], 
  [ 75, 120, 121 ], [ 65, 121, 122 ], [ 86, 122, 123 ], [ 71, 123, 124 ], 
  [ 67, 124, 125 ], [ 97, 125, 126 ], [ 64, 118, 126 ], [ 1, 9, 63 ], 
  [ 1, 2, 58 ], [ 2, 3, 48 ], [ 3, 4, 61 ], [ 4, 5, 41 ], [ 5, 6, 55 ], 
  [ 6, 7, 35 ], [ 7, 8, 60 ], [ 8, 9, 38 ], [ 9, 10, 18 ], [ 4, 10, 11 ], 
  [ 11, 12, 57 ], [ 7, 12, 13 ], [ 13, 14, 50 ], [ 1, 14, 15 ], 
  [ 15, 16, 44 ], [ 6, 16, 17 ], [ 17, 18, 47 ], [ 18, 19, 27 ], 
  [ 13, 19, 20 ], [ 3, 20, 21 ], [ 16, 21, 22 ], [ 22, 23, 59 ], 
  [ 10, 23, 24 ], [ 24, 25, 53 ], [ 15, 25, 26 ], [ 26, 27, 56 ], 
  [ 27, 28, 36 ], [ 22, 28, 29 ], [ 12, 29, 30 ], [ 25, 30, 31 ], 
  [ 5, 31, 32 ], [ 19, 32, 33 ], [ 33, 34, 62 ], [ 24, 34, 35 ], 
  [ 2, 35, 36 ], [ 36, 37, 45 ], [ 31, 37, 38 ], [ 21, 38, 39 ], 
  [ 34, 39, 40 ], [ 14, 40, 41 ], [ 28, 41, 42 ], [ 8, 42, 43 ], 
  [ 33, 43, 44 ], [ 11, 44, 45 ], [ 45, 46, 54 ], [ 40, 46, 47 ], 
  [ 30, 47, 48 ], [ 43, 48, 49 ], [ 23, 49, 50 ], [ 37, 50, 51 ], 
  [ 17, 51, 52 ], [ 42, 52, 53 ], [ 20, 53, 54 ], [ 54, 55, 63 ], 
  [ 49, 55, 56 ], [ 39, 56, 57 ], [ 52, 57, 58 ], [ 32, 58, 59 ], 
  [ 46, 59, 60 ], [ 26, 60, 61 ], [ 51, 61, 62 ], [ 29, 62, 63 ] ]);
}
  
\noindent and the following function for backtracking algorithm

{ 
\begin{verbatim}
L:=[];;
chk:=function(L,g)
local x,y;
if L=[] then return true; fi;
x:=Length(L);
for y in [1..x-1] do
if Distance(g,x,y)<=L[x] and L[x]=L[y] then
return false;
fi;
od;
return true;
end;
Backtrack(L,[1..26],chk,Order(g),g);
\end{verbatim}}
\noindent obtaining the following coloring, where the $i$th entry of $\Gamma$ corresponds to the color of the vertex $i$.   
{\tiny 
$\Gamma=[ 
1, 1, 1, 1, 1, 1, 1, 1, 1, 1, 1, 1, 1, 1, 1, 1, 1, 1, 1, 1, 1, 1, 1, 1, 1, 1, 1, 1, 1, 1, 1, 1, 1, 1,   1, 1, 1, 1, 1,1, 1, 1, 1, 1, 1, 1, 1, 1, 1, 1, 1, 1, 1, 1, 1, 1, 1, 1, 1, 1, 1, 1, 1,\\
2, 3, 2, 3, 2, 3, 2, 3, 4, 3, 5, 2, 6, 2, 5, 2, 4, 2, 7, 3,   8, 3, 2, 9, 2, 3, 2, 3, 5,
3, 10, 11, 2, 3, 12, 13, 2, 3, 2, 14, 3, 15, 2, 16, 3, 4, 17, 18, 3, 4, 19, 5, 3, 20, 21, 5, 3, 22, 23,\\ 24, 25, 2, 26 ]$},

\noindent implying that for the  $(3,12)$-Moore graph $G$,  $\chi_\rho (G)\le 26$. Similarly, for the $(4,12)$-Moore graph $G$, we obtained a packing coloring using 219 colors. Thus, for the  $(4,12)$-Moore graph $G$, $\chi_\rho (G)\le 219$.


\end{document}